\theoremstyle{definition}
\newtheorem{thm}{Theorem}
\newtheorem{lem}[thm]{Lemma}
\newtheorem{rem}[thm]{Remark}
\numberwithin{equation}{section}
\numberwithin{thm}{section}
\begin{document}

\title{Computation of the exponential function of matrices by a formula without oscillatory integrals on infinite intervals}

\author[1]{\fnm{Masato} \sur{Suzuki}}\email{suzuki-masato602@g.ecc.u-tokyo.ac.jp}
\author*[2]{\fnm{Ken'ichiro} \sur{Tanaka}}\email{kenichiro@comp.isct.ac.jp}

\affil[1]{\orgdiv{Department of Mathematical Informatics}, \orgname{Graduate School of Information Science and Technology, University of Tokyo}, 
\orgaddress{\street{7-3-1 Hongo, Bunkyo-ku}, \city{Tokyo}, \postcode{113-8656}, \country{Japan}}}
\affil*[2]{\orgdiv{Department of Mathematical and Computing Science}, \orgname{School of Computing, Institute of Science Tokyo}, 
\orgaddress{\street{2-12-1 Ookayama, Meguro-ku}, \city{Tokyo}, \postcode{152-8550}, \country{Japan}}}

\abstract{
We propose a quadrature-based formula for computing the exponential function of matrices 
with a non-oscillatory integral on an infinite interval and an oscillatory integral on a finite interval. 
In the literature, 
existing quadrature-based formulas are based on the inverse Laplace transform or the Fourier transform. 
We show these expressions are essentially equivalent in terms of complex integrals 
and choose the former as a starting point to reduce computational cost.
By choosing a simple integral path, we derive an integral expression mentioned above. 
Then, we can easily apply the double-exponential formula and the Gauss-Legendre formula, 
which have rigorous error bounds.
As numerical experiments show, 
the proposed formula outperforms the existing formulas
when the imaginary parts of the eigenvalues of matrices have large absolute values. }

\keywords{matrix exponential, quadrature-based method, double-exponential formula, Gauss-Legendre formula}

\pacs[MSC Classification]{65F60, 65D30}

\maketitle

%\tableofcontents

%-------
\section{Introduction}
\label{sec:intro}

This paper is concerned with numerical computation of 
the exponential function of a matrix $A \in \mathbb{C}^{m\times m}$ defined by 
\begin{align}
\exp(A) = \sum_{k=0}^{\infty} \frac{1}{k!} A^{k}. 
\label{eq:expA}
\end{align}
It appears in various problems of scientific computing 
such as
numerical solution of ordinary differential equations with the exponential integrators \cite{hochbruck2010exponential}, 
control theory \cite{franklin1998digital}, 
Markov chain process \cite{SIDJE1999345}, and
analysis of directed networks \cite{de2019analysis}. 
In response to this situation, 
many methods have been proposed for computing 
$\exp(A)$ or $\exp(A) b$ with $b \in \mathbb{C}^{m}$. 
Major examples are methods with Schur decomposition, P\'ade approximation, Newton methods, etc. 
See Higham's celebrated book \cite{higham2014functions}
and references therein for their details. 
These methods are useful for matrices with middle size. 
However, 
they are not necessarily effective for large matrices and it is difficult to parallelize them. 

Besides the methods above, 
quadrature-based methods have been studied 
for computing exponential \cite{schmelzer2006evaluating, tatsuoka2024computing} and other functions \cite{hale2008computing, tatsuoka2020algorithms, tatsuoka2020computing, tatsuoka2024preconditioning} of matrices. 
These methods
can be easily applied to large matrices, 
in particular, 
when they are either sparse or structured. 
Furthermore, 
we can easily make these methods parallel 
because their integrands can be computed independently on each node for quadrature. 
Readers can see more details in \cite[Sect. 18]{trefethen2014exponentially}.

A key for getting quadrature-based methods is 
to find integral formulas representing target functions. 
For the exponential function, 
there are two types of such integral formulas: 
the formula for the inverse Laplace transform (i.e., the Bromwich integral) \cite{cohen2007numerical}
\begin{align}
\exp(z) 
= 
\frac{1}{2 \pi \mathrm{i}} \int_{c - \mathrm{i} \infty}^{c + \mathrm{i} \infty}
\frac{\mathrm{e}^{\zeta}}{\zeta - z} \, \mathrm{d} \zeta,
\label{eq:exp_inv_L_formula} 
\end{align}
and the formulas for the Fourier transform 
\begin{align}
\exp(z |t|)
& =
\frac{1}{2 \pi} \int_{-\infty}^{\infty} \frac{2z}{x^{2} + z^{2}} \, \mathrm{e}^{\mathrm{i} t x} \, \mathrm{d}x 
\qquad \text{or}
\label{eq:exp_orig_formula_1} \\
%\notag \\
\mathrm{i} \, \mathrm{sign}(t) \exp(z |t|)
& =
\frac{1}{2 \pi} \int_{-\infty}^{\infty} \frac{2x}{x^{2} + z^{2}} \, \mathrm{e}^{\mathrm{i} t x} \, \mathrm{d}x
\label{eq:exp_orig_formula_2}
%\notag
\end{align}
for $z$ with $\mathop{\mathrm{Re}} z < 0$. 
The authors of 
\cite{talbot1979accurate,rizzardi1995modification,weideman2006optimizing,trefethen2006talbot, weideman2007parabolic}
use formulas given by deforming the integral paths of Formula~\eqref{eq:exp_inv_L_formula}. 
The deformed paths $\zeta = \zeta(s)$ are designed such that 
\(
\displaystyle
\lim_{s \to \pm \infty}
\mathop{\mathrm{Re}} \zeta(s) \to -\infty
\).  
This property realizes fast decay of the factor $\mathrm{e}^{\zeta(s)}$ in the integrand 
and contributes to fast convergence of quadrature applied to the formulas. 
Tatsuoka et al. \cite{tatsuoka2024computing}
use a formula derived from Formula~\eqref{eq:exp_orig_formula_2}
by letting $t = 1$. 
They apply the double exponential (DE) formula specialized to Fourier integrals 
\cite{ooura1999robust}
to the formula and realize accurate computation of the exponential function. 
These methods require numerical computation of oscillatory integrals on infinite intervals. 
Computing such integrals with rigorous error estimate is difficult
although there are several methods for such integrals \cite{longman1956note}.

In this paper, 
we propose a quadrature-based method to compute the exponential function of matrices 
without using oscillatory integrals on infinite intervals.
To this end, 
we aim to modify Formulas~\eqref{eq:exp_inv_L_formula}, \eqref{eq:exp_orig_formula_1}, or~\eqref{eq:exp_orig_formula_2}. 
In fact, 
they do not have essential difference
in view of complex contour integrals. 
To show this fact, 
we consider a contour $\varGamma$ surrounding $- \mathrm{i} z$ in the upper half plane. 
Then, by the Cauchy integral formula, we have
\begin{align}
\frac{1}{\pi} 
\oint_{\varGamma} \frac{\xi}{\xi^{2} + z^{2}} \mathrm{e}^{\mathrm{i} \xi} \, \mathrm{d}\xi
& = 
\frac{1}{2 \pi} 
\oint_{\varGamma} 
\left( 
\frac{1}{\xi + \mathrm{i} z} + \frac{1}{\xi - \mathrm{i} z} 
\right)
\mathrm{e}^{\mathrm{i} \xi}
\, \mathrm{d}\xi
\notag \\
& = 
\frac{1}{2 \pi} 
\oint_{\varGamma} 
\frac{1}{\xi + \mathrm{i} z} 
\mathrm{e}^{\mathrm{i} \xi}
\, \mathrm{d}\xi
= 
\frac{1}{2 \pi \mathrm{i}} 
\oint_{\varGamma} 
\frac{1}{ - (\mathrm{i} \xi - z)} 
\mathrm{e}^{\mathrm{i} \xi}
\, \mathrm{d}\xi
\notag \\
& = 
\frac{1}{2 \pi} 
\oint_{\varGamma'} 
\frac{1}{\zeta - z} 
\mathrm{e}^{\zeta}
\, \mathrm{d}\zeta,
\end{align}
where
$\Gamma'$ is a contour surrounding $z$ in the left half plane. Therefore \eqref{eq:exp_inv_L_formula} and \eqref{eq:exp_orig_formula_2} with $t=1$ is essentially equivalent. The equivalence of \eqref{eq:exp_inv_L_formula} and \eqref{eq:exp_orig_formula_1} with $t=1$ can be shown by a similar argument. Thus we start from the integral form \eqref{eq:exp_inv_L_formula} and construct a numerically efficient quadrature for the matrix exponential.

By using a special integral path, 
we propose a formula with a non-oscillatory integral on an infinite interval 
and oscillatory integral on a \emph{finite} interval. 
We apply the double exponential (DE) formula to the former integral and the Gauss-Legendre to the latter. 

The rest of this paper is organized as follows. 
In Section \ref{sec:non_osc_int_rep_of_exp}, 
we obtain an integral formula of the \emph{scalar} exponential, 
which avoids oscillatory integral on infinite intervals. 
In Section \ref{sec:num_int_for_exp}, we apply quadrature methods to the integral representation and derive rigorous error bounds of the methods. 
In Section \ref{sec:num_int_for_mat_exp}, we explain the corresponding method to compute the \emph{matrix} exponential,
and show that the error bounds can be obtained in the same manner as the scalar case.
We show numerical results in Section \ref{sec:num_exp}. In Section \ref{sec:proofs}, we show detailed proofs of the theorems presented in Sections \ref{sec:num_int_for_exp} and \ref{sec:num_int_for_mat_exp}. 
We conclude this paper in Section \ref{sec:conclusion}.

%-------
\section{Integral formula for the exponential function}
\label{sec:non_osc_int_rep_of_exp}

\subsection{Formula with a complex contour integral}

We derive a formula for $\exp(z)$ for $z \in \mathbb{C}$ with $\mathop{\mathrm{Re}} z < 0$. 
To this end, 
we consider the complex integral 
\begin{align}
\frac{1}{2 \pi \mathrm{i}} \oint_{\varGamma} \frac{\mathrm{e}^{\zeta}}{\zeta -z} \, \mathrm{d}\zeta, 
\label{eq:starting_integral}
\end{align}
where $\varGamma$ is a contour with the counterclockwise orientation. 
To give such a contour surrounding the pole $\zeta =  z$ in the left half plane, 
we define line segments $H_{\beta}(s)$ and $V_{\beta}(s,t)$ for $s,t \in \mathbb{R}$ and $\beta > 0$ as follows: 
\begin{align}
& V_{\beta}(s) := \{ z = x + \mathrm{i} y \in \mathbb{C} \mid x = s, y \in [-\beta,\beta] \},
\notag \\
& H_{\beta}(s, t) := \{ z = x + \mathrm{i} y \in \mathbb{C} \mid x \in [s, t], \ y = \beta \}.
\notag 
\end{align}
Let $\alpha$ and $r$ be real numbers with 
\begin{align}
|\mathop{\mathrm{Im}} z| < \alpha 
\quad \text{and} \quad
|\mathop{\mathrm{Re}} z| < r, 
\label{eq:cond_for_alpha_and_r}
\end{align}
respectively. 
Then, we define a contour $\varGamma_{\alpha, r}$ by 
\begin{align}
\varGamma_{\alpha, r}
:= 
V_{\alpha}(0) \cup H_{\alpha}(-r,0)\cup V_{\alpha}(-r) \cup H_{-\alpha}(-r,0)
%H_{\alpha}(0) \cup V_{\alpha}(0,r) \cup H_{\alpha}(r) \cup V_{-\alpha}(0,r)
\notag
\end{align}
with the counterclockwise orientation.
By definition, 
the contour $\varGamma_{\alpha, r}$ surrounds the pole $\zeta = z$. 
Therefore 
it follows from the residue theorem that 
\begin{align}
\frac{1}{2\pi \mathrm{i}} \oint_{\varGamma_{\alpha, r}} \frac{\mathrm{e}^{\zeta}}{\zeta - z}  \, \mathrm{d}\zeta
& = 
\frac{1}{2 \pi \mathrm{i}} \cdot 2 \pi \mathrm{i} \mathop{\mathrm{Res}}_{\zeta =  z}
\left(
\frac{\mathrm{e}^{\zeta}}{\zeta - z}
\right)
= 
\mathrm{e}^{z}.
\label{eq:cont_int_formula_of_exp}
\end{align}

\subsection{Derivation of a formula by taking a limit}

Formula~\eqref{eq:cont_int_formula_of_exp} is rewritten in the form
\begin{align}
\exp(z) 
= & \, 
\frac{1}{2 \pi \mathrm{i}} \left(
\int_{r}^{0} \frac{\mathrm{e}^{\mathrm{i} \alpha - x}}{\mathrm{i}\alpha -x -z}\, \mathrm{d}x
+ \int_{0}^{r} \frac{\mathrm{e}^{-\mathrm{i}\alpha-x}}{-\mathrm{i}\alpha -x-z} \, \mathrm{d}x
\right.
\notag \\
& \, 
\phantom{\frac{1}{\pi \mathrm{i}} (}
\left.
+ \int_{-\alpha}^{\alpha} 
\frac{\mathrm{i}\mathrm{e}^{\mathrm{i}x}}{\mathrm{i} x-z} \, \mathrm{d}x
+ 
\int_{\alpha}^{-\alpha} \frac{\mathrm{i}\mathrm{e}^{\mathrm{i}x-r}}{\mathrm{i}x-r-z} \,  \mathrm{d}x
\right)
\notag \\
= & \, 
\frac{1}{2\pi \mathrm{i}} \int_{0}^{r} 
\left(
\frac{\mathrm{e}^{\mathrm{i} \alpha}}{z-\mathrm{i}\alpha+x} -
\frac{\mathrm{e}^{-\mathrm{i}\alpha}}{z+\mathrm{i}\alpha +x} 
\right)
\mathrm{e}^{- x} \, \mathrm{d}x
\notag \\
& \, 
+ \frac{\alpha}{2\pi} \int_{-1}^{1} 
\frac{\mathrm{e}^{\mathrm{i}\alpha x}}{\mathrm{i}\alpha x-z} \, \mathrm{d}x
- \bar{I}_{r}, 
\label{eq:exp_rewritten_form}
\end{align}
where 
\begin{align}
\bar{I}_{r} 
:= 
\frac{1}{2\pi }
\int_{-\alpha}^{\alpha} \frac{\mathrm{e}^{\mathrm{i}x-r}}{z-\mathrm{i}x +r}  \, \mathrm{d}x. 
\notag
\end{align}
Below we show that
\(
\displaystyle
\lim_{r \to \infty} \bar{I}_{r} = 0
\). 
Indeed, for $r$ with $r > |z|$, we have
\begin{align}
|\bar{I}_{r}| 
& \leq
\frac{1}{2\pi}
\int_{-\alpha}^{\alpha} \frac{\mathrm{e}^{-r}}{|z-\mathrm{i}x+r|} \,  \mathrm{e}^{-r} \, \mathrm{d}x
\leq
\frac{1}{2\pi}
\int_{-\alpha}^{\alpha} \frac{ \mathrm{e}^{-r}}{|r-\mathrm{i}x|- |z|} \,  \mathrm{d}x
\notag \\
& \leq 
\frac{1}{2\pi}
\int_{-\alpha}^{\alpha} \frac{\mathrm{e}^{-r}}{r-|z|} \, \mathrm{d}x
=
\frac{\alpha\mathrm{e}^{-r}}{\pi (r-|z|)}\, . 
\notag 
\end{align}
Because the value in the right-hand side goes to $0$ as $r \to \infty$, we have 
\(
\displaystyle
\lim_{r \to \infty} I_{r} = 0
\). 
Therefore, by letting $r \to \infty$ in \eqref{eq:exp_rewritten_form}, 
we get the following theorem giving a formula for $\exp(z)$. 

\begin{thm}
\label{thm:int_formula_for_exp}
Let $z$ be a complex number with $\mathop{\mathrm{Re}} z < 0$. 
Then, for any real number $\alpha$ with $|\mathop{\mathrm{Im}} z| < \alpha$, 
the following equality holds: 
\begin{align}
\exp(z)
=
\frac{1}{2\pi\mathrm{i}} \int_{0}^{\infty} 
\left(
\frac{\mathrm{e}^{\mathrm{i} \alpha}}{z-\mathrm{i}\alpha+x} -
\frac{\mathrm{e}^{- \mathrm{i} \alpha} }{z+\mathrm{i}\alpha +x}
\right)
\mathrm{e}^{- x} \, \mathrm{d}x
+ 
\frac{\alpha}{2\pi} \int_{-1}^{1} 
\frac{\mathrm{e}^{\mathrm{i}\alpha x}}{\mathrm{i}\alpha x-z} \, \mathrm{d}x.
\label{eq:derived_formula_for_exp}
\end{align}
In the special case $z \in \mathbb{R}$, this formula is simplified as follows: 
\begin{align}
\exp(z)
=
\frac{1}{\pi} 
\mathop{\mathrm{Im}} \left(
\int_{0}^{\infty} 
\frac{\mathrm{e}^{\mathrm{i}\alpha}}{z-\mathrm{i}\alpha+x} \, 
\mathrm{e}^{- x} \, \mathrm{d}x
\right)
+ 
\frac{\alpha}{2\pi} \int_{-1}^{1} 
\frac{\mathrm{e}^{\mathrm{i}\alpha x}}{\mathrm{i}\alpha x-z} \, \mathrm{d}x.
\label{eq:derived_formula_for_exp_real_z}
\end{align}
\end{thm}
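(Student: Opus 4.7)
The plan is to start directly from the contour identity~\eqref{eq:cont_int_formula_of_exp}, parameterize each of the four edges of the rectangle $\varGamma_{\alpha,r}$ explicitly, and then send $r\to\infty$. Concretely, I would parameterize the right vertical side $V_{\alpha}(0)$ by $\zeta=\mathrm{i} x$ with $x\in[-\alpha,\alpha]$, which after the rescaling $x\mapsto \alpha x$ produces exactly the finite oscillatory integral appearing as the second term in~\eqref{eq:derived_formula_for_exp}; the top and bottom sides $H_{\alpha}(-r,0)$ and $H_{-\alpha}(-r,0)$ by $\zeta=-x\pm\mathrm{i}\alpha$ with $x\in[0,r]$, whose two contributions combine after factoring out the common $\mathrm{e}^{-x}$ into a truncated version of the first integral in~\eqref{eq:derived_formula_for_exp}; and the left vertical side $V_{\alpha}(-r)$ by $\zeta=-r+\mathrm{i} x$, whose contribution I call $\bar I_r$. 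The only bookkeeping needed here is to keep the signs consistent with the counterclockwise orientation of $\varGamma_{\alpha,r}$.

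The one genuinely analytic step is to show $\bar I_r\to 0$ as $r\to\infty$. For $r>|z|$, the reverse triangle inequality gives
\[
|z-\mathrm{i} x+r|\ \ge\ |r-\mathrm{i} x|-|z|\ \ge\ r-|z|,
\]
so that $|\bar I_r|\le \alpha\,\mathrm{e}^{-r}/(\pi(r-|z|))$, which clearly tends to $0$. Passing to the limit then delivers~\eqref{eq:derived_formula_for_exp}. Note that the assumption $|\mathop{\mathrm{Im}} z|<\alpha$ is precisely what guarantees that the pole $\zeta=z$ lies strictly inside $\varGamma_{\alpha,r}$ for all sufficiently large $r$, so that the residue evaluation underlying~\eqref{eq:cont_int_formula_of_exp} is legitimate; the condition $\mathop{\mathrm{Re}} z<0$ then ensures that, once $r$ exceeds $|\mathop{\mathrm{Re}} z|$, the pole stays inside the rectangle as $r\to\infty$.

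For the real case, I would simply observe that when $z\in\mathbb{R}$ the two summands in the first integrand of~\eqref{eq:derived_formula_for_exp} are complex conjugates, since $\overline{\mathrm{e}^{\mathrm{i}\alpha}/(z-\mathrm{i}\alpha+x)}=\mathrm{e}^{-\mathrm{i}\alpha}/(z+\mathrm{i}\alpha+x)$. Their difference therefore equals $2\mathrm{i}\mathop{\mathrm{Im}}\bigl(\mathrm{e}^{\mathrm{i}\alpha}/(z-\mathrm{i}\alpha+x)\bigr)$, and dividing by $2\pi\mathrm{i}$ produces the $\mathop{\mathrm{Im}}$-expression in~\eqref{eq:derived_formula_for_exp_real_z}; the finite oscillatory integral is unchanged. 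I do not anticipate any serious obstacle: each step reduces to a direct calculation, and the main subtlety is sign tracking in the parameterization. The vanishing of $\bar I_r$ is the sole place where a genuine estimate is required, and the simple pointwise bound above is already enough.
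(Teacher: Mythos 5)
Your proposal is correct and follows essentially the same route as the paper's own derivation: parameterize the four sides of the rectangle $\varGamma_{\alpha,r}$, identify the top/bottom, right, and left contributions, estimate the left-side term $\bar I_r$ by $\alpha\,\mathrm{e}^{-r}/(\pi(r-|z|))$, and pass to the limit $r\to\infty$. Your observation that for $z\in\mathbb{R}$ the two summands in the first integrand are complex conjugates is the correct justification of \eqref{eq:derived_formula_for_exp_real_z}, a step the paper states but does not spell out.
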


For later use, we define 
$I_{\alpha}(z)$ and $J_{\alpha}(z)$ by 
the first and second integral of Formula~\eqref{eq:derived_formula_for_exp}, respectively, i.e., 
\begin{align}
& I_{\alpha}(z)
:=
\frac{1}{2\pi\mathrm{i}} \int_{0}^{\infty} 
\left(
\frac{\mathrm{e}^{\mathrm{i} \alpha}}{z-\mathrm{i}\alpha+x} -
\frac{\mathrm{e}^{- \mathrm{i} \alpha} }{z+\mathrm{i}\alpha +x}
\right)
\mathrm{e}^{- x} \, \mathrm{d}x
\quad \text{and}
\label{eq:I_non_osc_int} \\
& J_{\alpha}(z)
:= 
\frac{\alpha}{2\pi} \int_{-1}^{1} 
\frac{\mathrm{e}^{\mathrm{i}\alpha x}}{\mathrm{i}\alpha x-z} \, \mathrm{d}x.
\label{eq:J_osc_int} 
\end{align}

\begin{rem}
Our idea is based on that considered by Milovanovi{\'c} (2017) \cite{milovanovic2017computing}. 
They use a rectangular contour for computing Fourier-type integrals 
to avoid oscillatory integrals on infinite intervals. 
\end{rem}

%-------
\section{Approximation formula for the exponential function by numerical integration}
\label{sec:num_int_for_exp}

To compute $\exp(z)$ by Formula~\eqref{eq:derived_formula_for_exp}, 
we apply numerical integration formulas to 
$I_{\alpha}(z)$ in \eqref{eq:I_non_osc_int} 
and 
$J_{\alpha}(z)$ in \eqref{eq:J_osc_int}. 

\subsection{Approximation formula for $I_{\alpha}(z)$}

Let $f_{\alpha}(z, x)$ be defined by
\begin{align}
f_{\alpha}(z, x) 
:=
\frac{1}{2\pi\mathrm{i}} 
\left(
\frac{\mathrm{e}^{\mathrm{i} \alpha}}{z-\mathrm{i}\alpha+x} -
\frac{\mathrm{e}^{- \mathrm{i} \alpha} }{z+\mathrm{i}\alpha +x}
\right)
\mathrm{e}^{- x}.
\label{eq:def_integrand_of_non_osc}
\end{align}
Then 
the function $f_{\alpha}(z, \cdot)$ is a non-oscillatory function 
with exponential decay on the semi-infinite interval $[0, \infty)$
and 
$I_{\alpha}(z)$ is given by an integral of $f_{\alpha}(z, \cdot)$ on $[0, \infty)$.  
In addition, 
we can regard $f_{\alpha}(z, \cdot)$ as an analytic function in a complex domain. 
More precisely, 
if $z$ is given by $z = u + \mathrm{i} v$ for $u, v \in \mathbb{R}$ with $u < 0$, 
the function $f_{\alpha}(z, \cdot)$ is an analytic function in
\begin{align}
\{ x \in \mathbb{C} \mid x \neq - u - \mathrm{i} (v \pm \alpha) \}. 
\label{eq:domain_of_f_alpha}
\end{align}
%\textcolor{blue}{
%In particular, 
%because $|v| < \alpha$ holds by~\eqref{eq:cond_for_alpha_and_r}, 
%$f_{\alpha}(z, \cdot)$ is analytic in $\{ y \in \mathbb{C} \mid | \mathop{\mathrm{Im}} y | < \alpha - |v| \}$. }

For computing integrals on $[0,\infty)$ of such analytic functions with exponential decay, 
the double-exponential (DE) formula with variable transformation
\begin{align}
\phi(t) 
:= 
\log \left(
1 + \exp(\pi \sinh t)
\right)
\label{eq:def_DE_trans_4}
\end{align}
is very useful 
\cite{
takahasi1974double, 
tanaoka2023NMVTeng, 
tanaka2009function}. 
Therefore we apply the DE formula to $I_{\alpha}(z)$ and get a numerical integration formula
\begin{align}
\tilde{I}_{\alpha, n, h}(z)
:= 
h \sum_{k=-n}^{n} f_{\alpha}(z, \phi(kh)) \, \phi'(kh), 
\label{eq:DE_for_I_alpha}
\end{align}
where $n$ is a positive integer setting the number of nodes and $h > 0$ is a grid spacing. 

\begin{rem}
We can also use the Gauss-Laguerre quadrature when computing integrals of exponentially convergent integrands like \eqref{eq:def_integrand_of_non_osc} on $[0,\infty)$. 
As shown in Section \ref{sec:numerical_experiment_DE_GL}, it outperforms the DE formula in some cases, and can also be a practical choice. However we choose the DE formula, due to the superiorities mentioned in Section \ref{sec:numerical_experiment_DE_GL}.
\end{rem}

By determining an appropriate $h$ for a given integer $n$ in Formula~\eqref{eq:DE_for_I_alpha}, 
we can bound the error $|I_{\alpha}(z) - \tilde{I}_{\alpha, n, h}(z)|$ in terms of $n$
as shown in the following theorem. 
We prove this theorem in Section~\ref{sec:proof_total_error_of_I_alpha_n_h}.

\begin{thm}
\label{thm:total_error_of_I_alpha_n_h}
Let $z$ be a complex number with $\mathop{\mathrm{Re}} z < 0$ and 
let $\alpha$ be a real number with $|\mathop{\mathrm{Im}} z| + 2\pi < \alpha$. 
Furthermore, 
let $d$ be a real number with
\begin{align}
0 < d < \mathrm{arctan} \left( \frac{\alpha - |\mathop{\mathrm{Im}} z| - 2 \pi}{- \mathop{\mathrm{Re}} z + \log 2} \right). 
\label{eq:appropriate_d_for_D_d}
\end{align}
For a positive integer $n$ with $n > 1/(4d)$, 
we define $h$ by
\begin{align}
h := \frac{\log (4dn)}{n}. 
\label{eq:def_of_h}
\end{align}
Then the error of Formula~\eqref{eq:DE_for_I_alpha} is bounded as follows:
\begin{align}
|I_{\alpha}(z) - \tilde{I}_{\alpha, n, h}(z)|
\leq 
c_{d} K_{z, \alpha, d} 
\exp \left(
- \frac{2\pi d n}{\log(4dn)}
\right),
\label{eq:total_error_of_I_alpha_n_h}
\end{align}
where 
$c_{d}$ is a positive real number depending only on $d$ and 
$K_{z, \alpha, d}$ is defined by 
\begin{align}
K_{z, \alpha, d} 
:= 
\frac{1/\pi}{(\alpha - |\mathop{\mathrm{Im}} z| - 2 \pi) \cos d - (-\mathop{\mathrm{Re}} z + \log 2) \sin d}.
\label{eq:def_of_K_alpha_z}
\end{align}
\end{thm}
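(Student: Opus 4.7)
The plan is to follow the standard framework for analyzing the truncated trapezoidal rule after a double-exponential change of variables, as developed by Takahasi--Mori and refined by Sugihara. First, set
\[
F_{\alpha,z}(t) := f_{\alpha}(z, \phi(t)) \, \phi'(t),
\]
so that $I_{\alpha}(z) = \int_{-\infty}^{\infty} F_{\alpha,z}(t) \, \dd t$ and $\tilde{I}_{\alpha,n,h}(z) = h \sum_{k=-n}^{n} F_{\alpha,z}(kh)$. I would split the total error into a \emph{discretization error} $\lvert \int_{-\infty}^{\infty} F_{\alpha,z}(t)\,\dd t - h \sum_{k=-\infty}^{\infty} F_{\alpha,z}(kh) \rvert$ and a \emph{truncation error} $h \sum_{|k|>n} |F_{\alpha,z}(kh)|$. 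Balancing these two contributions is what will ultimately dictate the choice $h = \log(4dn)/n$.

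The first and main technical step is to identify an explicit horizontal strip $D_{d} := \{ t \in \mathbb{C} \mid |\mathop{\mathrm{Im}} t| < d \}$ on which $F_{\alpha,z}$ is holomorphic together with a sharp supremum bound. From \eqref{eq:domain_of_f_alpha} the poles of $f_{\alpha}(z, \cdot)$ sit at $x = -\mathop{\mathrm{Re}} z - \mathrm{i}(\mathop{\mathrm{Im}} z \pm \alpha)$, both at imaginary distance at least $\alpha - |\mathop{\mathrm{Im}} z|$ from the real axis. The conformal behaviour of $\phi(t) = \log(1 + \exp(\pi \sinh t))$ on thin strips around $\mathbb{R}$ is classical: for $|\mathop{\mathrm{Im}} t| < d$ the image $\phi(D_{d})$ lies essentially in $\{\mathop{\mathrm{Re}} w > -\log 2\}$, and the imaginary part of $\phi(t)$ is controlled by an additive constant of size $2\pi$ coming from the winding of the argument of $1 + \exp(\pi \sinh t)$ when $\pi \sinh t$ has large negative real part. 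Translating ``$\phi(D_{d})$ avoids both poles'' into a geometric inequality and applying a rotation by the angle $d$ produces exactly the condition \eqref{eq:appropriate_d_for_D_d}, while the reciprocal of the resulting distance reproduces $K_{z,\alpha,d}$ of \eqref{eq:def_of_K_alpha_z}. I expect this geometric/analytic estimate to be the main obstacle, and plan to carry it out via a case analysis depending on whether $\mathop{\mathrm{Re}}(\pi \sinh t)$ is small or large.

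Once analyticity on $D_{d}$ and the pointwise bound $|f_{\alpha}(z,\phi(t))| \lesssim K_{z,\alpha,d}$ on $D_{d}$ are established, the discretization error is estimated by the classical contour-shift (Poisson/residue) argument and is bounded by a $d$-dependent multiple of $K_{z,\alpha,d} \exp(-2\pi d/h)$. For the truncation error, the double-exponential decay $|\phi'(t)|, |\ee^{-\phi(t)}| \lesssim \exp(-(\pi/2)\exp(|t|))$ for $|t|$ large yields a bound of order $K_{z,\alpha,d}\exp(-c\exp(nh))$ for some absolute constant $c>0$.

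Finally, substituting $h = \log(4dn)/n$ gives $\exp(-2\pi d/h) = \exp(-2\pi d n/\log(4dn))$, while the truncation bound becomes $\exp(-4cdn)$, which is dominated by the discretization term once $n > 1/(4d)$. Combining both contributions and absorbing all $d$-dependent multiplicative constants into a single $c_{d}$ yields \eqref{eq:total_error_of_I_alpha_n_h}.
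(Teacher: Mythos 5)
Your proposal follows essentially the same approach as the paper: find a simple region covering $\phi(\mathcal{D}_d)$, verify the poles of $f_\alpha(z,\cdot)$ lie outside it under condition~\eqref{eq:appropriate_d_for_D_d}, compute the distance from those poles to the region (yielding the constant $K_{z,\alpha,d}$), and then invoke the general DE-quadrature error theory. The one structural difference is that you plan to re-derive the DE error estimate by splitting into discretization and truncation errors, whereas the paper simply cites this as a black box (Theorem~\ref{thm:error_bound_for_DE4}); the genuine new content of the proof, the geometric analysis of $\phi(\mathcal{D}_d)$ (Lemmas~\ref{thm:phi_D_d_in_H_d} and~\ref{thm:appropriate_d_for_D_d}), is the same in both.
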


\subsection{Approximation formula for $J_{\alpha}(z)$}
Let $g_\alpha(z,x)$ be defined by
\begin{align}
g_{\alpha}(z, x) 
:=
\frac{\alpha}{2\pi} 
\frac{\mathrm{e}^{\mathrm{i} \alpha x}}{\mathrm{i}\alpha x-z}.
\label{eq:def_hat_g_alpha_z}
\end{align}
The function $g_{\alpha}(z, \cdot)$ is analytic in a complex region containing the interval $[-1, 1]$
and $J_{\alpha}(z)$ is the integral of $g_{\alpha}(z, \cdot)$ with weight $1$. 
Therefore 
we apply the Gauss--Legendre quadrature formula to $J_{\alpha}(z)$ and get a numerical integration formula
\begin{align}
\tilde{J}_{\alpha, N}(z)
:= 
\sum_{i=1}^{N} w_{i} \, g_{\alpha}(z, t_{i}), 
\label{eq:def_tilde_J_alpha_N}
\end{align}
where $t_{i}$ are the zeros of the Legendre polynomial of degree $N$ 
and $w_{i}$ are the corresponding weights. 
To bound the error $|J_{\alpha}(z) - \tilde{J}_{\alpha, N}(z)|$, 
by letting $\delta$ be a real number with
$0 < \delta < |\mathop{\mathrm{Re}} z|/\alpha$, 
we define 
\begin{align}
\rho_{z, \alpha, \delta} 
:= 
|\mathop{\mathrm{Re}} z|/\alpha - \delta + \sqrt{(|\mathop{\mathrm{Re}} z|/\alpha - \delta)^{2} + 1}.
\label{eq:def_rho_z_alpha}
\end{align}
Then the error is bounded as shown in the following theorem, 
which we prove in Section~\ref{sec:proof_total_error_of_J_alpha_N}. 

\begin{thm}
\label{thm:total_error_of_J_alpha_N}
Let $z$ be a complex number with $\mathop{\mathrm{Re}} z < 0$ and 
let $\alpha$ be a real number with $|\mathop{\mathrm{Im}} z| < \alpha$. 
In addition, 
let $\delta$ be a real number with $0 < \delta < |\mathop{\mathrm{Re}} z|/\alpha$ and 
define $\rho_{z,\alpha, \delta}$ by~\eqref{eq:def_rho_z_alpha}.
Then, for $N$ with $N \geq 2$, 
the error of Formula~\eqref{eq:def_tilde_J_alpha_N} is bounded as follows: 
\begin{align}
|J_{\alpha}(z) - \tilde{J}_{\alpha, N}(z)| 
\leq 
\frac{32 \exp(|\mathop{\mathrm{Re}} z |)}{15 \pi \delta} \cdot
\frac{\rho_{z,\alpha, \delta}^{-2(N-1)}}{\rho_{z,\alpha, \delta}^{2} - 1}. 
\label{eq:total_error_of_J_alpha_N}
\end{align}
\end{thm}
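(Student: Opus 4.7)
The plan is to apply the standard Gauss--Legendre error bound on a Bernstein ellipse. Recall that if $f$ is analytic in a neighborhood of the Bernstein ellipse $E_{\rho}$ (with foci $\pm 1$ and sum of semi-axes equal to $\rho$) and $|f| \le M$ on $E_{\rho}$, then the $N$-point Gauss--Legendre quadrature error satisfies
\begin{equation*}
\left| \int_{-1}^{1} f(x)\,\mathrm{d}x - \sum_{i=1}^{N} w_{i} f(t_{i}) \right|
\le \frac{64 M}{15 \, (1 - \rho^{-2}) \, \rho^{2N}}.
\end{equation*}
Specializing to $f = g_{\alpha}(z,\cdot)$, I would pick $\rho = \rho_{z,\alpha,\delta}$ and identify, as the integrand's only singularity, the point $x_{\ast} = -\mathrm{i}z/\alpha$, which has real part $\mathrm{Im}(z)/\alpha$ and imaginary part $|\mathrm{Re}\,z|/\alpha$.

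The choice of $\rho$ is dictated by requiring its semi-minor axis to equal $|\mathrm{Re}\,z|/\alpha - \delta$. Indeed, setting $a := |\mathrm{Re}\,z|/\alpha - \delta$, a direct calculation gives $\rho_{z,\alpha,\delta} - \rho_{z,\alpha,\delta}^{-1} = 2a$, so the ellipse $E_{\rho_{z,\alpha,\delta}}$ has semi-minor axis exactly $a$. Since $|\mathop{\mathrm{Im}} x_{\ast}| = |\mathrm{Re}\,z|/\alpha > a$, the pole lies strictly outside $E_{\rho_{z,\alpha,\delta}}$, and $g_{\alpha}(z,\cdot)$ is analytic in an open neighborhood of the closed ellipse.

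Next I would estimate $|g_{\alpha}(z,x)|$ uniformly on $E_{\rho_{z,\alpha,\delta}}$. Writing $x = x_{\mathrm{R}} + \mathrm{i} x_{\mathrm{I}}$ with $|x_{\mathrm{I}}| \le a$, the numerator satisfies
\begin{equation*}
|\mathrm{e}^{\mathrm{i}\alpha x}| = \mathrm{e}^{-\alpha x_{\mathrm{I}}} \le \mathrm{e}^{\alpha a} = \mathrm{e}^{|\mathrm{Re}\,z| - \alpha\delta} \le \mathrm{e}^{|\mathrm{Re}\,z|}.
\end{equation*}
For the denominator, write $\mathrm{i}\alpha x - z = (-\alpha x_{\mathrm{I}} + |\mathrm{Re}\,z|) + \mathrm{i}(\alpha x_{\mathrm{R}} - \mathop{\mathrm{Im}} z)$. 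The real part satisfies $-\alpha x_{\mathrm{I}} + |\mathrm{Re}\,z| \ge -\alpha a + |\mathrm{Re}\,z| = \alpha\delta > 0$, so $|\mathrm{i}\alpha x - z| \ge \alpha\delta$. Combining these two bounds,
\begin{equation*}
|g_{\alpha}(z,x)| \le \frac{\alpha}{2\pi} \cdot \frac{\mathrm{e}^{|\mathrm{Re}\,z|}}{\alpha\delta} = \frac{\mathrm{e}^{|\mathrm{Re}\,z|}}{2\pi\delta} =: M.
\end{equation*}

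Finally, I would plug this $M$ and $\rho = \rho_{z,\alpha,\delta}$ into the Gauss--Legendre error bound and rewrite $1/((1-\rho^{-2})\rho^{2N}) = \rho^{-2(N-1)}/(\rho^{2} - 1)$ to obtain \eqref{eq:total_error_of_J_alpha_N}. The main obstacle is the lower bound $|\mathrm{i}\alpha x - z| \ge \alpha\delta$ on the ellipse; although it reduces to a single inequality on the real part, one must check carefully that the parameter $\rho_{z,\alpha,\delta}$ is defined precisely so that this bound holds uniformly, rather than only at the closest point of $E_{\rho}$ to $x_{\ast}$.
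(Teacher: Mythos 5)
Your proof is correct and follows essentially the same route as the paper's: identify the singularity $x_{\ast}=-\mathrm{i}z/\alpha$, choose the Bernstein ellipse with semi-minor axis $|\mathop{\mathrm{Re}}z|/\alpha-\delta$, bound the integrand by $\exp(|\mathop{\mathrm{Re}}z|)/(2\pi\delta)$ on the closed ellipse, and invoke the Rabinowitz/Trefethen Gauss--Legendre estimate. The ``obstacle'' you raise at the end is already resolved by your own argument: since $\mathop{\mathrm{Re}}(\mathrm{i}\alpha x - z)=-\alpha x_{\mathrm{I}}+|\mathop{\mathrm{Re}}z|\geq\alpha\delta$ holds at \emph{every} point of the closed ellipse (not just the closest one), the lower bound $|\mathrm{i}\alpha x - z|\geq\alpha\delta$ is automatically uniform.
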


\begin{rem}
The integrand $g_\alpha(z,\cdot)$ becomes highly oscillatory when $\alpha$ is huge, 
and the convergence of the Gauss-Legendre quadrature becomes slow. 
Filon and Levin-type methods \cite{deano2017computing} can also be considered 
for computing highly oscillatory integrals on finite intervals. 
However, error analysis of these methods focuses on asymptotic analysis with respect to the frequency, and explicit error bounds with the number of sample points are not obtained. In addition, these methods typically require solving linear equations of function values. Therefore, constructing a quadrature based on these methods for a matrix-valued integrand is challenging.
\end{rem}

\subsection{Choice of $\alpha$}
\label{sec:choice_of_alpha}

Here we consider how to choose the parameter $\alpha$,  
based on Theorems~\ref{thm:total_error_of_I_alpha_n_h} and~\ref{thm:total_error_of_J_alpha_N}. 
Since we compute two quadrature formulas simultaneously, the errors of the two formulas should decrease in close order of magnitude. 
If we use $(2n+1)$ points for the DE formula and $N=kn~(k>0)$ points for the Gauss-Legendre formula, the main factors of the bounds for $\tilde{I}_{\alpha, n, h}(z)$ and $\tilde{J}_{\alpha, N}(z)$ are
\begin{align}
(\exp(- \pi d ))^{2n/\log(4dn)} 
\qquad
\text{and}
\qquad
(\rho_{z,\alpha, \delta}^{-2})^{kn}, 
\end{align}
respectively. 
Then it is sensible to choose $\alpha$ that satisfies
\begin{align}
\exp(2\pi d) = \rho_{z,\alpha, \delta}^{2k}
& \iff
\exp(\pi d/k) = \rho_{z,\alpha, \delta}
\notag \\
& \iff
\frac{\exp(\pi d/k) - \exp(-\pi d/k)}{2} = \frac{1}{\alpha} |\mathop{\mathrm{Re}} z| - \delta
\notag \\
& \iff
\sinh\left( \frac{\pi}{k} d \right) = \frac{1}{\alpha} |\mathop{\mathrm{Re}} z| - \delta, 
\label{eq:pre_criterion_choose_alpha}
\end{align}
where the second equivalence relation is owing to~\eqref{eq:def_rho_z_alpha}. 
Theorem~\ref{thm:total_error_of_I_alpha_n_h} 
indicates that $d$ with~\eqref{eq:appropriate_d_for_D_d} should be as large as possible 
to make the convergence rate of $\tilde{I}_{\alpha, n, h}(z)$ fast. 
Therefore 
we assume that $d$ is equal to the upper bound in~\eqref{eq:appropriate_d_for_D_d}
and 
neglect $\delta$ in~\eqref{eq:pre_criterion_choose_alpha} for simplicity
to get
\begin{align}
\sinh\left( \frac{\pi}{k} \, \mathrm{arctan} \left( \frac{\alpha - |\mathop{\mathrm{Im}} z| - 2 \pi}{- \mathop{\mathrm{Re}} z + \log 2} \right) \right) 
= 
\frac{1}{\alpha} |\mathop{\mathrm{Re}} z|.
\label{eq:criterion_choose_alpha}
\end{align}
On the one hand, 
the function in the left-hand side is increasing for $\alpha > |\mathop{\mathrm{Im}} z| + 2 \pi$ and converges to $\sinh(\pi^2/2k)$ as $\alpha \to \infty$.  
On the other hand, 
the function in the right-hand side is decreasing for $\alpha > 0$ and zero as $\alpha \to \infty$.
Therefore for any $k$,
Equation~\eqref{eq:criterion_choose_alpha}
has a unique solution $\alpha_{k}$ with $\alpha_{k} > |\mathop{\mathrm{Im}} z| + 2 \pi$. 
\begin{rem}
In the matrix version algorithm that we show in Section \ref{sec:num_int_for_mat_exp}, we compute $4n+2$ resolvents for the DE formula and $N$ resolvents for the Gauss-Legendre quadrature (see \eqref{eq:f_alpha_z_eq_A}, \eqref{eq:hat_g_alpha_z_eq_A}, and \eqref{eq:approx_exp_A}). 
Therefore, $k=4$ is a natural choice to equalize the computational complexity for the two quadratures. The best choice of $k$ with respect to the numerical performance is discussed in Section \ref{sec:numerical_experiment_alpha_k}. 
\end{rem}

%-------
\section{Approximation formula for the exponential function of matrices}
\label{sec:num_int_for_mat_exp}

Let
$m$ 
be a positive integer and 
let $A \in \mathbb{C}^{m \times m}$
be an $m \times m$ matrix with eigenvalues $\lambda_{1}, \ldots, \lambda_{m}$. 
We assume that $\mathop{\mathrm{Re}} \lambda_{i} < 0$ holds for $i = 1,\ldots, m$. 
To derive a formula for $\exp(A)$, 
we substitute $z = A$ into $I_{\alpha}(z)$ and $J_{\alpha}(z)$. 

Then, 
the function $\exp(A)$ is given by the sum of
\begin{align}
& I_{\alpha}(A)
=
\int_{0}^{\infty} f_{\alpha}(A, x) \, \mathrm{d}x
\quad 
\text{and}
\notag \\
& J_{\alpha}(A)
=
\int_{-1}^{1} g_{\alpha}(A, x) \, \mathrm{d}x, 
\notag
\end{align}
where
\begin{align}
f_{\alpha}(A, x)
= & \, 
\frac{\mathrm{e}^{- x}}{2\pi \mathrm{i}} 
\Big(
\mathrm{e}^{\mathrm{i} \alpha}
\left( (x-\mathrm{i} \alpha)I + A \right)^{-1} 
\notag \\
& \phantom{\frac{\mathrm{e}^{- y}}{\pi} \Big(} \, 
- \, 
\mathrm{e}^{- \mathrm{i} \alpha}
\left( (x + \mathrm{i} \alpha) I + A \right)^{-1}  
\Big)
\qquad
\text{and} 
\label{eq:f_alpha_z_eq_A} \\
g_{\alpha}(A, x)
= & \, 
\frac{\alpha}{2\pi} \exp(\mathrm{i}\alpha x) \left(\mathrm{i}\alpha x I-A \right)^{-1}. 
\label{eq:hat_g_alpha_z_eq_A} 
\end{align}
By applying the numerical integration formulas in~\eqref{eq:DE_for_I_alpha} and~\eqref{eq:def_tilde_J_alpha_N}
to $I_{\alpha}(A)$ and $J_{\alpha}(A)$, respectively, 
we get an approximation formula
\begin{align}
\exp(A)
& \approx
\tilde{I}_{\alpha, n , h}(A) + \tilde{J}_{\alpha, N}(A) 
\notag \\
& = 
h \sum_{k = -n}^{n} f_{\alpha}(A, \phi(kh)) \phi'(kh)
+ 
\sum_{i=1}^{N} w_{i} \, g_{\alpha}(A, t_{i}).
\label{eq:approx_exp_A}
\end{align}
We bound the error of this formula 
with respect to a matrix norm $\| \cdot \|$ 
like the $2$-norm $\| \cdot \|_{2}$ and 
the Frobenius norm $\| \cdot \|_{\mathrm{F}}$. 
Then, we give upper bounds for 
$\| I_{\alpha}(A) - \tilde{I}_{\alpha, n , h}(A) \|$
and 
$\| J_{\alpha}(A) - \tilde{J}_{\alpha, N}(A) \|$
in the following theorems. 

\begin{thm}
\label{thm:total_error_of_I_alpha_n_h_for_A}
Let $A \in \mathbb{C}^{m \times m}$
be an $m \times m$ matrix with eigenvalues $\lambda_{1}, \ldots, \lambda_{m}$
with $\mathop{\mathrm{Re}} \lambda_{i} < 0$ 
and
let $\alpha$ be a real number with 
\begin{align}
\max_{1 \leq i \leq m} |\mathop{\mathrm{Im}} \lambda_{i} | + 2\pi < \alpha. 
\notag
\end{align}
Furthermore, 
let $d$ be a real number with
\begin{align}
0 < d < 
\min_{1 \leq i \leq m}
\mathrm{arctan} \left( \frac{\alpha - |\mathop{\mathrm{Im}} \lambda_{i}| - 2 \pi}{- \mathop{\mathrm{Re}} \lambda_{i} + \log 2} \right). 
\label{eq:appropriate_d_for_D_d_for_A}
\end{align}
For a positive integer $n$ with $n > 1/(4d)$, 
we define $h$ by
\begin{align}
h := \frac{\log (4dn)}{n}. 
\label{eq:def_of_h_for_A}
\end{align}
Then 
the error 
$\| I_{\alpha}(A) - \tilde{I}_{\alpha, n , h}(A) \|$
is bounded as follows: 
\begin{align}
\| I_{\alpha}(A) - \tilde{I}_{\alpha, n, h}(A) \|
\leq 
c_{d} L_{A, \alpha, d, \| \cdot \|} 
\exp \left(
- \frac{2\pi d n}{\log(4dn)}
\right),
\label{eq:total_error_of_I_alpha_n_h_for_A}
\end{align}
where 
$c_{d}$ is a positive real number depending only on $d$ and 
$L_{A, \alpha, d, \| \cdot \|}$ is a positive real number 
depending only on $A$, $\alpha$, $d$, and the matrix norm $\| \cdot \|$. 
\end{thm}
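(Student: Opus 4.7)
My plan is to reproduce the proof of the scalar Theorem~\ref{thm:total_error_of_I_alpha_n_h} coordinate-freely for the matrix-valued integrand $f_\alpha(A,\cdot)$ from \eqref{eq:f_alpha_z_eq_A}, replacing absolute values by the matrix norm $\|\cdot\|$ throughout. The decomposition of the DE error into a trapezoidal-rule discretisation term and a truncation term, the variable transformation $\phi$ from \eqref{eq:def_DE_trans_4}, and the choice of mesh $h$ given by \eqref{eq:def_of_h_for_A} are all unchanged; only the step in which the scalar integrand is bounded along the boundary of its strip of analyticity has to be upgraded to a matrix-norm bound.

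The first thing to verify is analyticity of the map $y \mapsto f_\alpha(A,y)$. It fails to be analytic exactly where one of the shifted matrices $(y-\mathrm{i}\alpha)I+A$ or $(y+\mathrm{i}\alpha)I+A$ is singular, i.e., at the points $y=-\lambda_i+\mathrm{i}\alpha$ and $y=-\lambda_i-\mathrm{i}\alpha$ for the eigenvalues $\lambda_i$ of $A$. The hypothesis \eqref{eq:appropriate_d_for_D_d_for_A} is simply the scalar hypothesis \eqref{eq:appropriate_d_for_D_d} imposed simultaneously for every $\lambda_i$, and therefore guarantees that the image under $\phi$ of the strip $\{|\mathop{\mathrm{Im}} t|<d\}$ avoids every such singular point. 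Consequently $t\mapsto f_\alpha(A,\phi(t))\phi'(t)$ is holomorphic in $\{|\mathop{\mathrm{Im}} t|<d\}$, and the contour-deformation and Cauchy-estimate arguments underpinning the scalar proof apply \emph{verbatim} to the matrix-valued integrand.

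Next I would upgrade the sup-norm estimate. In the scalar proof the supremum of $|f_\alpha(z,\cdot)|$ along the image under $\phi$ of the boundary of the analyticity strip collapses into the closed-form constant $K_{z,\alpha,d}$ of \eqref{eq:def_of_K_alpha_z}. In the matrix case the analogous quantity is the supremum of
$$
\mathrm{e}^{-\mathop{\mathrm{Re}} y}\,\bigl(\|((y-\mathrm{i}\alpha)I+A)^{-1}\|+\|((y+\mathrm{i}\alpha)I+A)^{-1}\|\bigr)
$$
along the same curve. Absorbing the universal prefactor into a single constant $L_{A,\alpha,d,\|\cdot\|}$ and feeding this into the same Cauchy/trapezoidal estimate as in the scalar proof then delivers \eqref{eq:total_error_of_I_alpha_n_h_for_A} with the same $c_d$ and the same exponential rate $\exp(-2\pi d n/\log(4dn))$.

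The main obstacle is checking that this supremum is actually finite. This reduces to a uniform bound on two shifted resolvents of $A$ along the image curve. By the analyticity step the curve lies in the resolvent set, so both resolvents are norm-continuous on it; moreover $\|(\zeta I+A)^{-1}\|\to 0$ as $|\zeta|\to\infty$, which combined with the decay factor $\mathrm{e}^{-\mathop{\mathrm{Re}} y}$ confines the supremum to a compact portion of the curve, where continuity yields finiteness. The dependence on the matrix norm enters at exactly this point: for normal $A$ in the $2$-norm one recovers essentially $\max_i K_{\lambda_i,\alpha,d}$, while for non-normal $A$ a Schur-decomposition or pseudospectral bound is needed. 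Since the theorem only asserts existence of the constant $L_{A,\alpha,d,\|\cdot\|}$, the qualitative argument above suffices, and making the constant explicit in the non-normal case is a natural direction for further quantitative refinement.
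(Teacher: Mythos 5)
Your argument is correct and establishes the stated estimate, but it takes a genuinely different route to the finiteness of the constant $L_{A,\alpha,d,\|\cdot\|}$ than the paper does. Both proofs begin identically: invoke the matrix-valued analogue of Theorem~\ref{thm:error_bound_for_DE4}, then reduce to two things — analyticity of $y\mapsto f_\alpha(A,y)$ on $\phi(\mathcal{D}_d)$ (which, as you observe, follows from the scalar Lemma~\ref{thm:appropriate_d_for_D_d} applied to each eigenvalue $\lambda_i$), and a uniform bound $\|f_\alpha(A,y)\|\le K\,|\exp(-y)|$ on $\phi(\mathcal{D}_d)$ with $\beta=1$. Where you diverge is in establishing the latter. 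You give a soft compactness argument: the two resolvents $((y\pm\mathrm{i}\alpha)I+A)^{-1}$ are norm-continuous on the closed set $\mathcal{H}_d\supset\overline{\phi(\mathcal{D}_d)}$ (which the strict inequality in~\eqref{eq:appropriate_d_for_D_d_for_A} keeps away from the spectrum) and their norms vanish as $|y|\to\infty$, so their supremum over $\phi(\mathcal{D}_d)$ is finite; call it $L_{A,\alpha,d,\|\cdot\|}$ up to the factor $1/\pi$. The paper instead produces an \emph{explicit} constant: it invokes Kato's inequality $\|T^{-1}\|\le\gamma\|T\|^{m-1}/|\det T|$ (Lemma~\ref{thm:T_inv_gen_norm}, with $\gamma=1$ for the $2$-norm, $\gamma=\sqrt{m}$ for the Frobenius norm), lower-bounds $|\det((y\pm\mathrm{i}\alpha)I+A)|=\prod_i|y\pm\mathrm{i}\alpha+\lambda_i|$ using the same distance-to-$\mathcal{H}_d$ estimate~\eqref{eq:dist_y_and_ialpha_z} applied to each eigenvalue, and then observes that the resulting explicit expression has a finite supremum over $\phi(\mathcal{D}_d)$, recorded as~\eqref{eq:def_M_A_alpha_d_norm}. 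Your approach is shorter, applies verbatim to any submultiplicative matrix norm, and is all that the theorem as worded requires; the paper's approach costs two extra lemmas and a determinant computation but in exchange gives a closed-form constant, which matters for the parameter-tuning discussion in Section~\ref{sec:choice_of_alpha}. One small imprecision in your write-up: the supremum is over the region $\phi(\mathcal{D}_d)$, not a curve, and it is the resolvent-norm factor alone (after dividing out $|\exp(-y)|$) whose boundedness you need, so invoking the decay factor $\mathrm{e}^{-\mathop{\mathrm{Re}} y}$ in that step is unnecessary — but this does not affect the validity of the argument.
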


\begin{thm}
\label{thm:total_error_of_J_alpha_N_for_A}
Let $A \in \mathbb{C}^{m \times m}$
be an $m \times m$ matrix with eigenvalues $\lambda_{1}, \ldots, \lambda_{m}$
with $\mathop{\mathrm{Re}} \lambda_{i} < 0$ 
and
let $\alpha$ be a real number with 
\begin{align}
\max_{1 \leq i \leq m} |\mathop{\mathrm{Im}} \lambda_{i} | < \alpha. 
\notag
\end{align}
In addition, let
\begin{align}
\eta_{A}
:=
\min_{1 \leq i \leq m} |\mathop{\mathrm{Re}} \lambda_{i}|
\label{eq:def_eta_A}
\end{align}
and let $\delta$ be a real number with $0 < \delta < (2/\alpha) \eta_{A}$ and 
define $\rho_{A, \alpha, \delta}$ by
\begin{align}
\rho_{A, \alpha, \delta}
:=
\eta_{A}/\alpha - \delta + \sqrt{(\eta_{A}/\alpha - \delta)^{2} + 1}.
\label{eq:def_rho_z_alpha_for_A}
\end{align}
Then, for $N$ with $N \geq 2$, 
the error $\| J_{\alpha}(A) - \tilde{J}_{\alpha, N}(A) \|$ is bounded as follows: 
\begin{align}
\| J_{\alpha}(A) - \tilde{J}_{\alpha, N}(A) \|
\leq 
\frac{64 \, C_{A, \alpha, \delta, \| \cdot \|}}{15} \cdot
\frac{\rho_{A,\alpha, \delta}^{-2(N-1)}}{\rho_{A,\alpha, \delta}^{2} - 1}, 
\label{eq:total_error_of_J_alpha_N_for_A}
\end{align}
where 
$C_{A, \alpha, \delta, \| \cdot \|}$ 
is a positive real number 
depending only on $A$, $\alpha$, $d$, and the matrix norm $\| \cdot \|$. 
\end{thm}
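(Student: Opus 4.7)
The plan is to mirror the proof of Theorem~\ref{thm:total_error_of_J_alpha_N} for the scalar case, with absolute values replaced by the matrix norm $\|\cdot\|$ and the scalar pole $(\mathrm{i}\alpha x - z)^{-1}$ replaced by the matrix resolvent $(\mathrm{i}\alpha x I - A)^{-1}$. The starting point is the matrix-valued version of the standard Gauss--Legendre error bound on a Bernstein ellipse: for any $\mathbb{C}^{m\times m}$-valued function $G$ that is analytic on and inside the ellipse $E_\rho$ with foci $\pm 1$ and semi-axis sum $\rho$, applying the Hermite-type contour representation of the quadrature error entrywise (or equivalently, applying $\|\cdot\|$ to the contour representation and using the triangle inequality in $\|\cdot\|$) yields
$$
\left\| \int_{-1}^1 G(x)\, \mathrm{d}x - \sum_{i=1}^N w_i\, G(t_i) \right\|
\le \frac{64}{15}\cdot \frac{\rho^{-2(N-1)}}{\rho^2-1}\,
\sup_{x \in E_\rho} \|G(x)\|,
$$
which is exactly the estimate used in the scalar theorem, now with matrix norms. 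I would apply it to $G(x) = g_\alpha(A,x)$ with $\rho = \rho_{A,\alpha,\delta}$.

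The next step is to bound $\sup_{x \in E_\rho} \|g_\alpha(A,x)\|$ by splitting off the exponential and the resolvent. The parameterization $x = \tfrac{1}{2}(\rho\mathrm{e}^{\mathrm{i}\theta} + \rho^{-1}\mathrm{e}^{-\mathrm{i}\theta})$ gives $|\mathop{\mathrm{Im}} x| \le (\rho - \rho^{-1})/2 = \eta_A/\alpha - \delta$, so $|\mathrm{e}^{\mathrm{i}\alpha x}| = \mathrm{e}^{-\alpha \mathop{\mathrm{Im}} x} \le \exp(\eta_A - \alpha\delta)$. For the resolvent, the same geometric argument as in the scalar case shows that the curve $\mathrm{i}\alpha E_\rho$ avoids the spectrum of $A$ uniformly: for every eigenvalue $\lambda_j$ and every $x \in E_\rho$,
$$
|\mathrm{i}\alpha x - \lambda_j|
\ge \bigl| -\alpha\mathop{\mathrm{Im}} x - \mathop{\mathrm{Re}}\lambda_j \bigr|
\ge |\mathop{\mathrm{Re}}\lambda_j| - (\eta_A - \alpha\delta)
\ge \alpha\delta,
$$
since $|\mathop{\mathrm{Re}}\lambda_j| \ge \eta_A$. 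Hence $x \mapsto (\mathrm{i}\alpha x I - A)^{-1}$ is analytic on (a neighborhood of) $E_\rho$ and its supremum there is a finite quantity depending only on $A$, $\alpha$, $\delta$, and $\|\cdot\|$.

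Collecting $\alpha/(2\pi)$, $\exp(\eta_A - \alpha\delta)$, and $\sup_{x \in E_\rho}\|(\mathrm{i}\alpha xI - A)^{-1}\|$ into the single constant $C_{A,\alpha,\delta,\|\cdot\|}$ then yields the claimed bound \eqref{eq:total_error_of_J_alpha_N_for_A}. The main obstacle is precisely this resolvent supremum: unlike the scalar case, where $|(\mathrm{i}\alpha x - z)^{-1}|$ equals the reciprocal of the spectral distance, for a matrix the resolvent norm can exceed this reciprocal by a factor reflecting eigenvector conditioning or pseudospectral geometry of $A$, and this factor cannot be expressed cleanly in terms of the spectral data alone. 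The theorem accommodates this by leaving $C_{A,\alpha,\delta,\|\cdot\|}$ unspecified; in concrete practice one would make it explicit via a diagonalization $A = V\Lambda V^{-1}$, giving $\sup\|(\mathrm{i}\alpha xI - A)^{-1}\|_2 \le \kappa_2(V)/(\alpha\delta)$, or via a pseudospectral estimate when $A$ is highly non-normal.
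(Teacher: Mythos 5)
Your proof is correct and follows the same high-level structure as the paper's: apply a matrix-valued version of the Bernstein-ellipse Gauss--Legendre error bound (Theorem~\ref{thm:error_of_GL} with $|\cdot|$ replaced by $\|\cdot\|$), bound the exponential factor by $|\mathop{\mathrm{Im}} x|\le(\rho-\rho^{-1})/2$, show that $\mathrm{i}\alpha x$ stays a distance $\ge\alpha\delta$ from every eigenvalue, and absorb the resolvent supremum into $C_{A,\alpha,\delta,\|\cdot\|}$. Where you diverge is in how you control the resolvent norm. You argue abstractly that $\sup_{x\in\overline{\mathcal{B}_\rho}}\|(\mathrm{i}\alpha xI-A)^{-1}\|$ is finite because the resolvent is continuous on a compact set disjoint from the spectrum, and you suggest the bound $\kappa_2(V)/(\alpha\delta)$ via diagonalization or a pseudospectral estimate as ways to make it concrete. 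The paper instead invokes Kato's determinant inequality, Lemma~\ref{thm:T_inv_gen_norm}, which gives $\|T^{-1}\|\le\gamma\|T\|^{m-1}/|\det T|$ with $\gamma$ depending only on the norm (Lemmas~\ref{thm:T_inv_2_norm} and~\ref{thm:T_inv_F_norm} give $\gamma=1$ for $\|\cdot\|_2$ and $\gamma=\sqrt m$ for $\|\cdot\|_{\mathrm F}$). Combined with $|\det(\mathrm{i}\alpha xI-A)|=\prod_i|\mathrm{i}\alpha x-\lambda_i|\ge(\alpha\delta)^m$, this yields the fully explicit constant of \eqref{eq:def_C_A_alpha_delta}, valid for arbitrary (possibly non-diagonalizable) $A$ and expressed in terms of spectral data, $\|A\|$, and $\delta$ only. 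Your argument suffices for the theorem as stated, since only the existence of $C_{A,\alpha,\delta,\|\cdot\|}$ is claimed, but it does not deliver the explicit formula that Remark~4.3 refers to, and your diagonalization fallback silently assumes $A$ is diagonalizable while the Kato route does not.
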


We can prove 
Theorems~\ref{thm:total_error_of_I_alpha_n_h_for_A} and~\ref{thm:total_error_of_J_alpha_N_for_A}
in a similar manner to the proofs of 
Theorems~\ref{thm:total_error_of_I_alpha_n_h} and~\ref{thm:total_error_of_J_alpha_N}, 
respectively. 
We prove these theorems in Sections~\ref{sec:proof_I_alpha_n_h_for_A} and~\ref{sec:proof_J_alpha_N_for_A}. 

\begin{rem}
The explicit forms of 
$L_{A, \alpha, d, \| \cdot \|}$ in Theorem~\ref{thm:total_error_of_I_alpha_n_h_for_A}
and 
$C_{A, \alpha, d, \| \cdot \|}$ in Theorem~\ref{thm:total_error_of_J_alpha_N_for_A}
are given 
by~\eqref{eq:def_M_A_alpha_d_norm} in Section~\ref{sec:proof_I_alpha_n_h_for_A} 
and~\eqref{eq:def_C_A_alpha_delta} in Section~\ref{sec:proof_J_alpha_N_for_A}, 
respectively. 
\end{rem}

%-------
\section{Numerical experiments}
\label{sec:num_exp}

All the tests were computed by MATLAB R2023b on a MacBook Pro with M1 8-Core CPU and 8GB RAM. 
We generated $4$ test matrices $A_1,\dots,A_4$ by the following procedure.
\begin{description}
\item[Step 1.] Generate an orthogonal matrix $Q\in \mathbb{R}^{100\times 100}$ by a QR decomposition of a random $100\times 100$ matrix.
\item[Step 2.] For each $i=1,\dots,4$, generate $100$ eigenvalues by independently sampling from a uniform distribution on the region $\Omega_i$ defined as follows:
\begin{align}
\Omega_1&:=[-100,-5],\notag\\
\Omega_2&:=\{z\in \mathbb{C}\mid \mathop{\mathrm{Re}}z\in [-100,-5],\, \mathop{\mathrm{Im}}z\in [-10,10]\},\notag\\
\Omega_3&:=\{z\in \mathbb{C}\mid \mathop{\mathrm{Re}}z\in [-100,-5],\, \mathop{\mathrm{Im}}z\in [-100,100]\},\notag\\
\Omega_4&:=\{z\in \mathbb{C}\mid \mathop{\mathrm{Re}}z\in [-100,-5],\, \mathop{\mathrm{Im}}z\in [-1000,1000]\notag\}.
\end{align}
Let $D_i \in \mathbb{C}^{n\times n}$ be the diagonal matrix with the sampled eigenvalues in the diagonal elements.

\item[Step 3.] Define $A_i\,(i=1,\dots,4)$ as follows:
\begin{align}
A_i:=QD_iQ^\top.\notag
\end{align}
\end{description}
The reference matrix $\exp(A_i)$ was calculated by
\[\exp(A_i)=Q\exp(D_i)Q^\top,\]
where $\exp(D_i)$ was calculated by applying the scalar exponential function to the diagonal elements.
\subsection{Convergence of the Proposed and Existing Methods}
\label{sec:numerical_experiment_1}
In this section, we compare the convergence of our proposed method with three existing quadrature-based methods denoted by ``Talbot'', ``Fixed Talbot'', and ``Tatsuoka DE'' explained below.

To compute the Bromwich integral \eqref{eq:exp_inv_L_formula}, several methods of reforming the contour have been proposed.
We use the Talbot contour, which is proposed in \cite{talbot1979accurate} and optimized by Weideman \cite{weideman2006optimizing}. Weideman obtains an improved rate by expanding the contour proportional to the number of evaluation points. However, this expansion leads to catastrophic cancellation when the contour becomes huge. Therefore, we also use a fixed contour of Talbot. We denote these methods by ``Talbot'' and ``Fixed Talbot''.

Tatsuoka et al. \cite{tatsuoka2024computing} proposed a method that computes a Fourier-type integral on the half-infinite interval on the real axis by using a special type of double exponential transform \cite{ooura1999robust}. We denote this method by ``Tatsuoka DE''.

For the proposed method, we used $k=4~(N=4n)$ to equalize the number of resolvent computing for the DE formula and Gauss-Legendre quadrature.
We used \eqref{eq:criterion_choose_alpha} to determine $\alpha$ by setting $k=4$, $|\mathop{\mathrm{Re}}z|=5$ and substituting $|\mathop{\mathrm{Im}}z|$ by the largest absolute imaginary part of the eigenvalues of each test matrix (0, 10, 100, 1000, respectively). 

Figure \ref{fig:exp_1} shows the numerical results. Because our proposed method requires to calculate more resolvents for a single evaluation point (for computing $I_\alpha (A)$), we compare the convergence by the total number of resolvent computation. 
When the eigenvalues are real ($A_1$), 
Talbot shows the fastest convergence. However, expanding the Talbot contour leads to cancellation errors, and the error does not converge. The proposed method converges fast in every case, especially when the imaginary part of the eigenvalues is large ($A_3, A_4$) and other methods fail.

\begin{figure}[htbp]
  \begin{minipage}[b]{0.49\linewidth}
    \centering
    \includegraphics[bb = 0 0 960 720, width=0.9\linewidth]{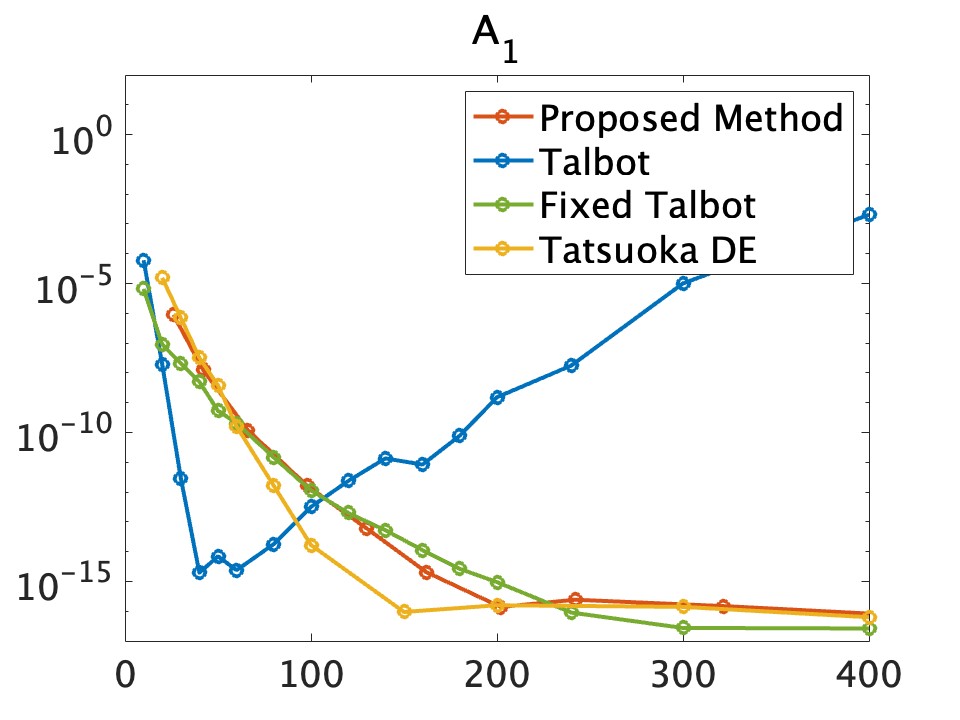}
    %\caption{$A_1$}
  \end{minipage}
  \begin{minipage}[b]{0.49\linewidth}
    \centering
    \includegraphics[bb = 0 0 960 720, width=0.9\linewidth]{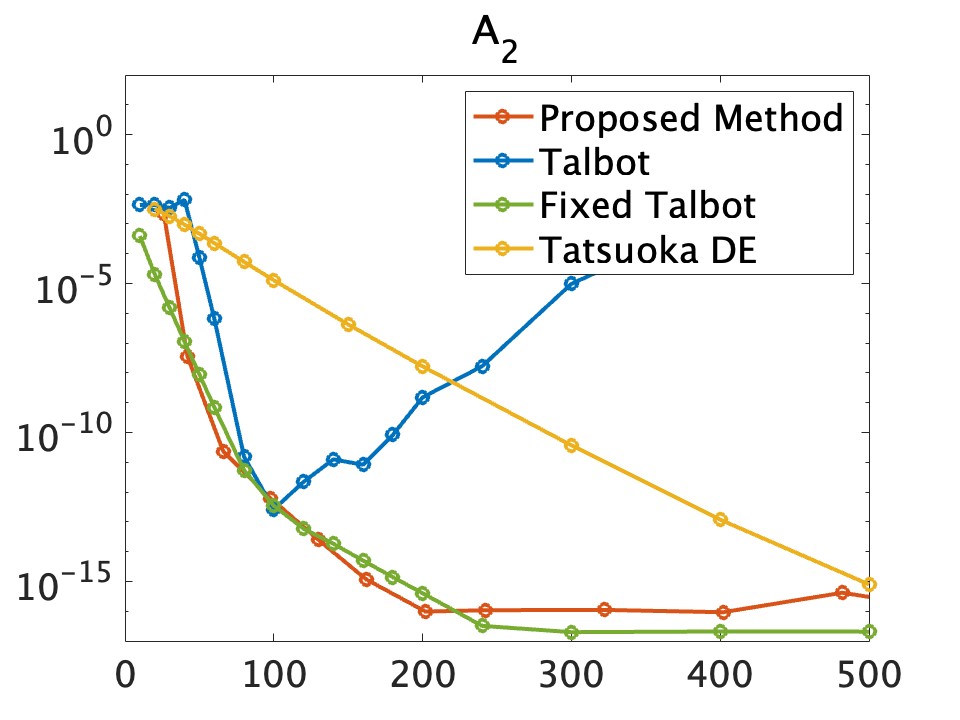}
    %\caption{$A_2$}
  \end{minipage}
   \begin{minipage}[b]{0.49\linewidth}
    \centering
    \includegraphics[bb = 0 0 960 720, width=0.9\linewidth]{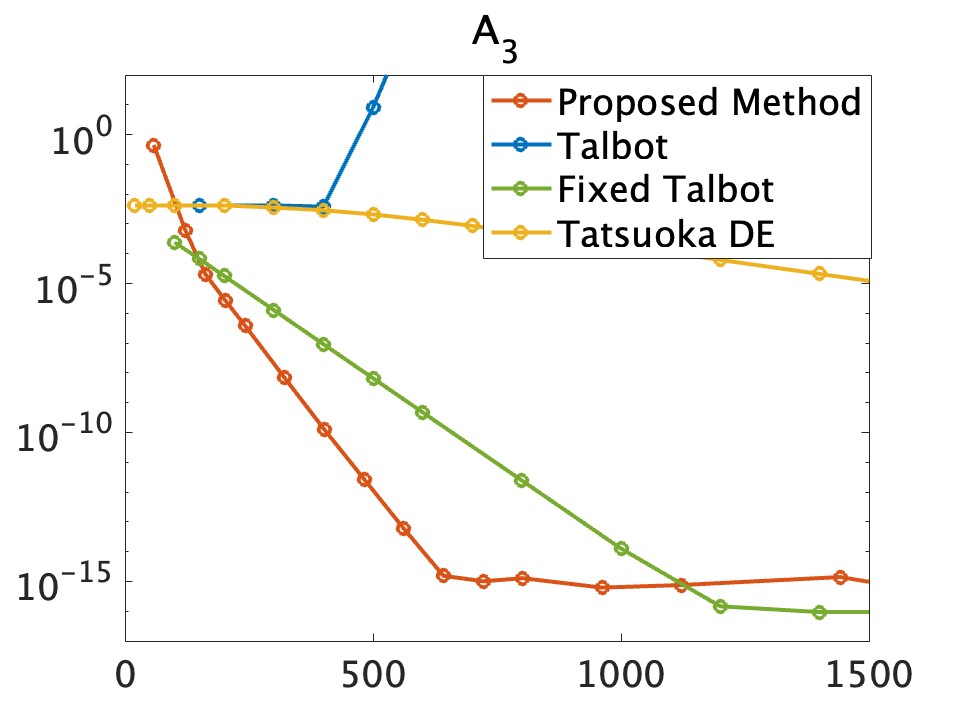}
    %\caption{$A_1$}
  \end{minipage}
  \begin{minipage}[b]{0.49\linewidth}
    \centering
    \includegraphics[bb = 0 0 960 720, width=0.9\linewidth]{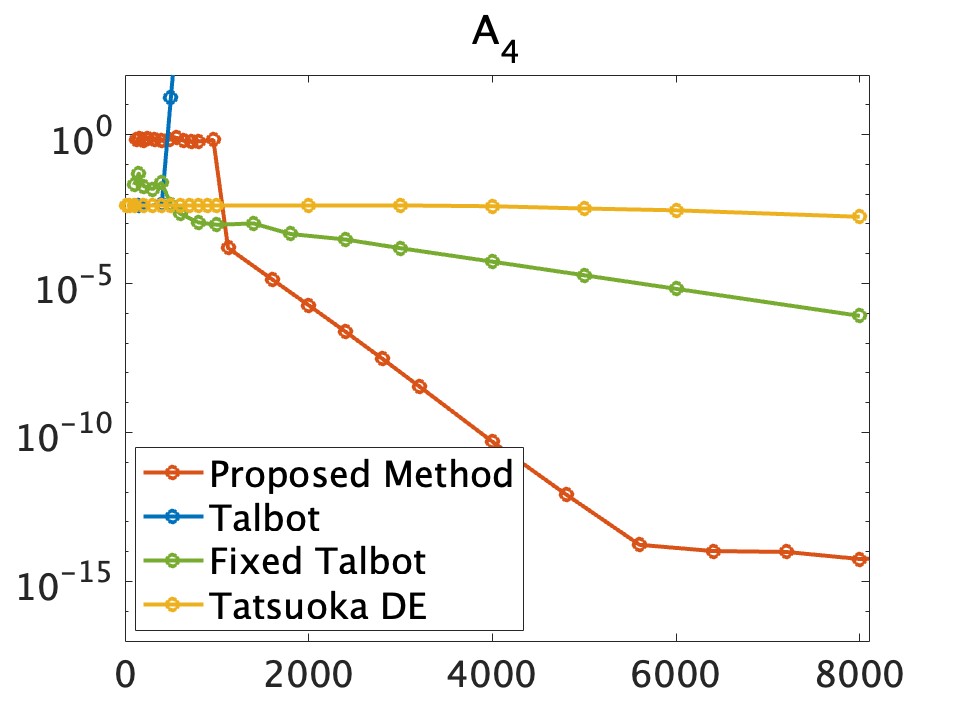}
    %\caption{$A_2$}
  \end{minipage}
  \caption{The horizontal axis shows the total number of resolvent calculations, and the vertical axis shows the absolute error in the matrix 2-norm. Note that the scale of the horizontal axis is different for each figure.}
  \label{fig:exp_1}
\end{figure}

\subsection{Setting of the Parameters $\alpha$ and $k$}
\label{sec:numerical_experiment_alpha_k}
In this section, we consider the optimal choice of $\alpha$ and $k\,(=N/n)$. 
In Section \ref{sec:choice_of_alpha}, our method chooses the natural $k=4$ and solves equation \eqref{eq:criterion_choose_alpha} to obtain $\alpha$. 
Although our choice is theoretical in the sense that it roughly equalizes the order of the two errors, other choices of $\alpha,\, k$ can be considered to improve the numerical performance. 

We use $A_3$ for the test matrix and compare the numerical error of the proposed method with several pairs of $\alpha$ and $k$.
The $\alpha$'s we used for the experiment are the solution $\alpha_k$ of \eqref{eq:criterion_choose_alpha} with $z=-5+100\mathrm{i}$. Table \ref{table:value_alpha_k} shows the values of $\alpha_k$ for several $k$'s.

\begin{table}[htbp]
\caption{Values of $\alpha_k$ with $z=-5+100\mathrm{i}$}\label{table:value_alpha_k}
\begin{tabular}{cccccc} \hline
$\alpha_1$ & $\alpha_2$ & $\alpha_4$ & $\alpha_8$ & $\alpha_{16}$ &  $\alpha_{32}$\\ \hline
$106.3683$ & $106.4534$ & $106.6234$ & $106.9638$ & $107.6550$ & $109.1497$\\ \hline
\end{tabular}
\end{table}

Figure \ref{fig:exp_alpha_k} shows the numerical results. 
The parameters we used in Section \ref{sec:numerical_experiment_1} is the yellow line of the figure $k=4$ with $\alpha = \alpha_4$. 
Although this pair of parameters converge at around $600$ resolvent calculations, using larger $\alpha$'s and $k$'s, such as $k=8,16$ with $\alpha = \alpha_{16}, \alpha_{32}$ converges at around $400$ resolvent calculations and is more efficient. 
This suggests that more examination is needed to the setting of $\alpha$ and $k$ to improve the numerical performance. 

\begin{figure}[htbp]
  \begin{minipage}[b]{0.49\linewidth}
    \centering
    \includegraphics[bb = 0 0 960 720, width=0.9\linewidth]{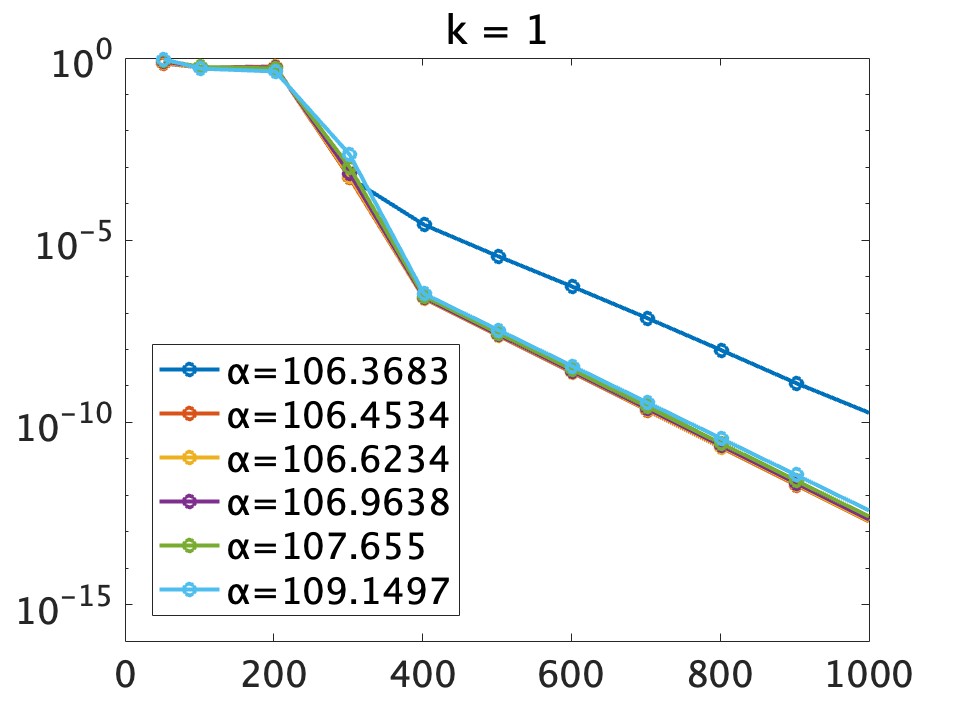}
    %\caption{$A_1$}
  \end{minipage}
  \begin{minipage}[b]{0.49\linewidth}
    \centering
    \includegraphics[bb = 0 0 960 720, width=0.9\linewidth]{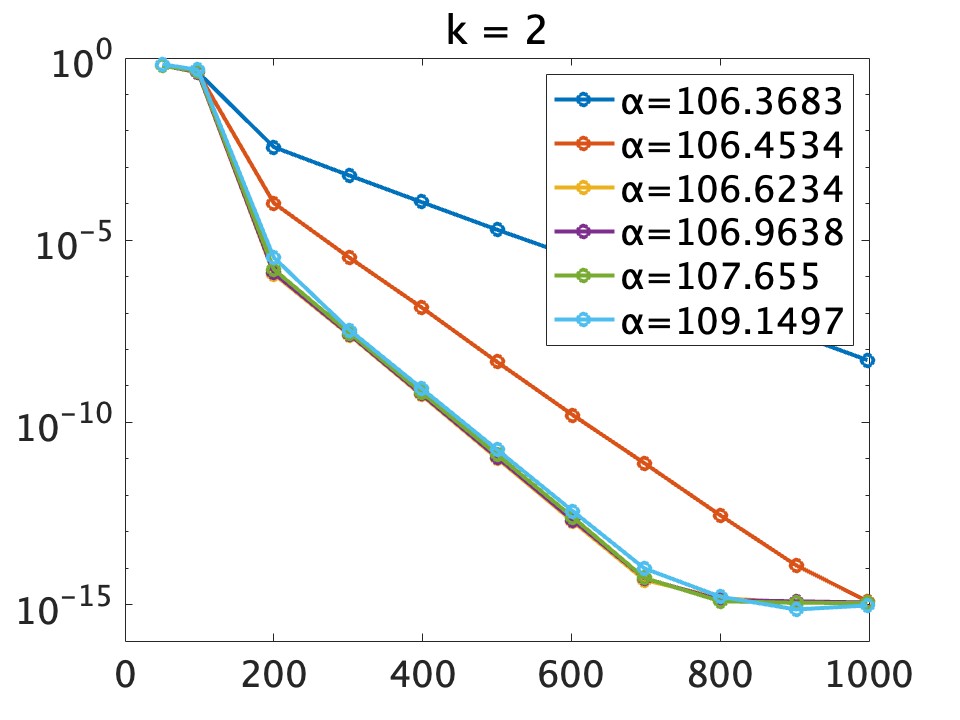}
    %\caption{$A_2$}
  \end{minipage}
  
    \begin{minipage}[b]{0.49\linewidth}
    \centering
    \includegraphics[bb = 0 0 960 720, width=0.9\linewidth]{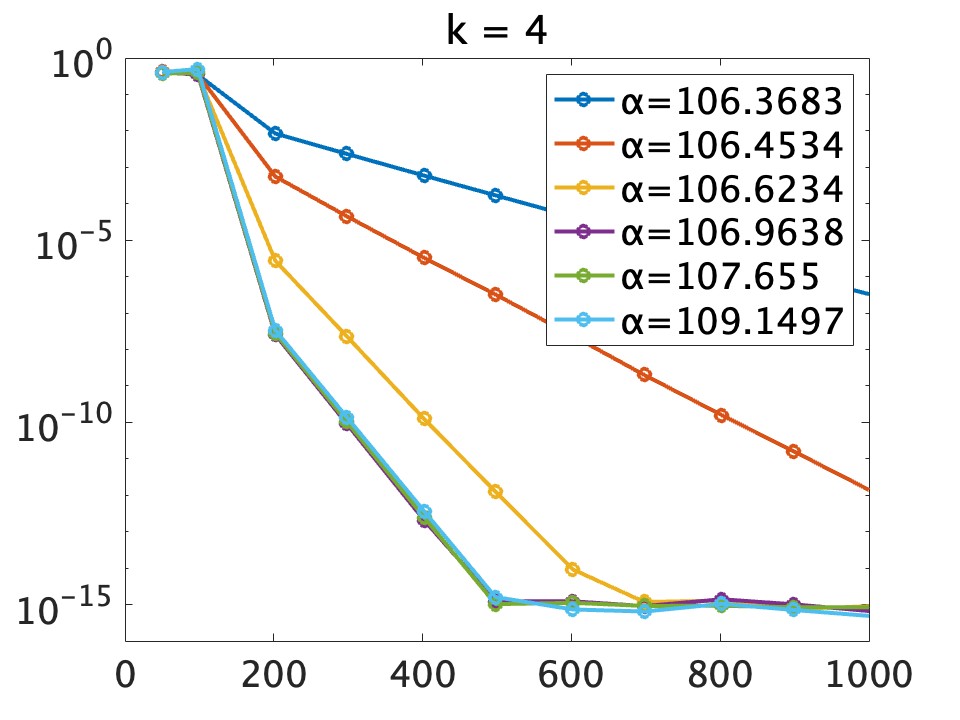}
    %\caption{$A_1$}
  \end{minipage}
  \begin{minipage}[b]{0.49\linewidth}
    \centering
    \includegraphics[bb = 0 0 960 720, width=0.9\linewidth]{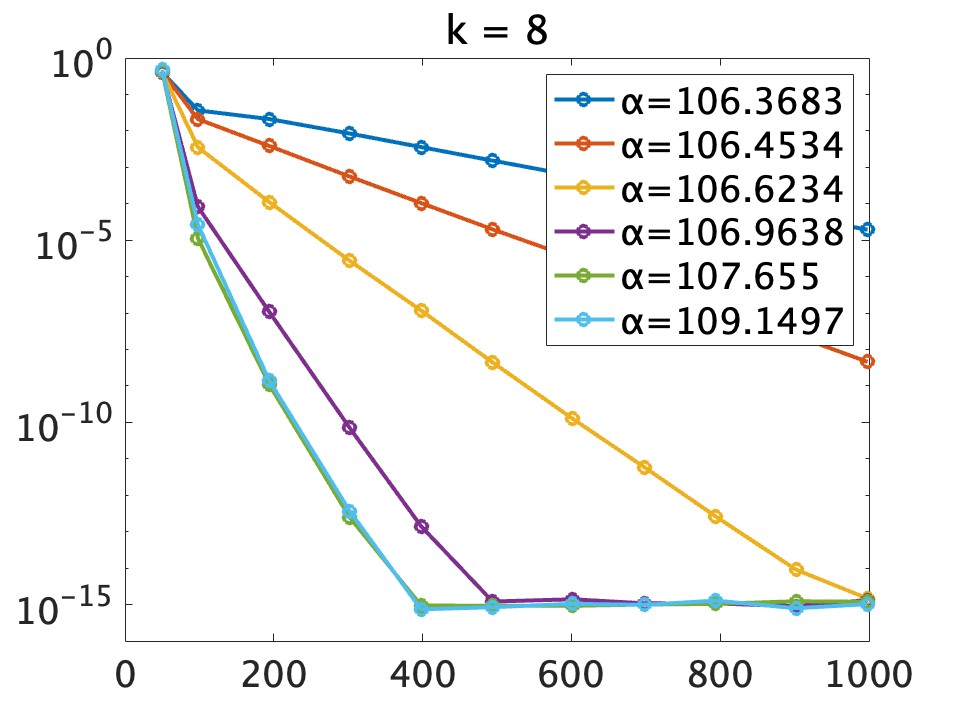}
    %\caption{$A_2$}
  \end{minipage}
  
    \begin{minipage}[b]{0.49\linewidth}
    \centering
    \includegraphics[bb = 0 0 960 720, width=0.9\linewidth]{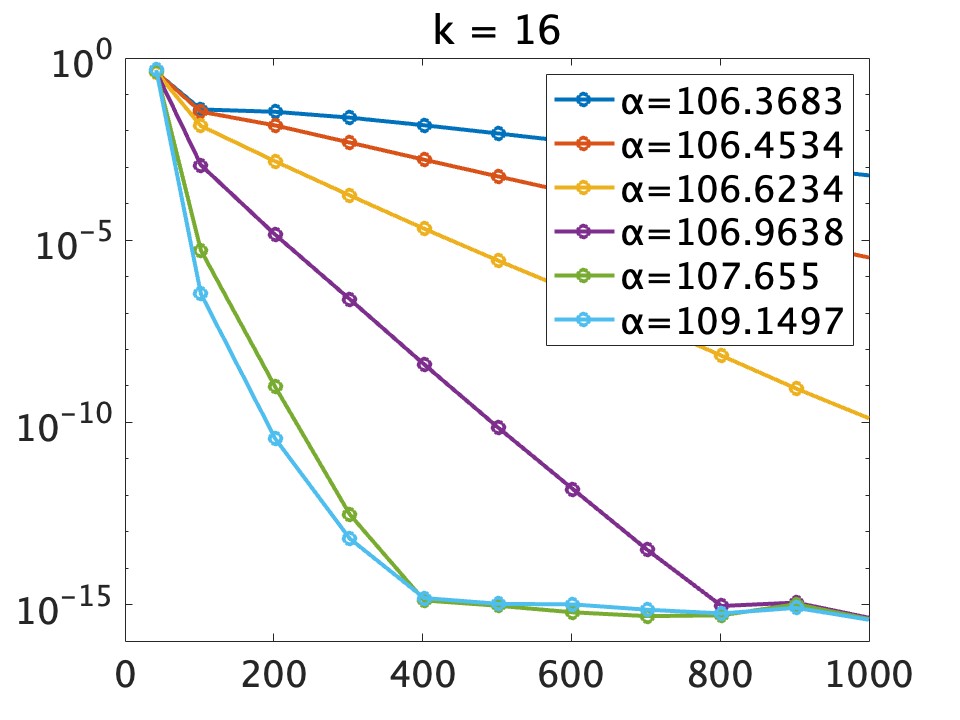}
    %\caption{$A_1$}
  \end{minipage}
  \begin{minipage}[b]{0.49\linewidth}
    \centering
    \includegraphics[bb = 0 0 960 720, width=0.9\linewidth]{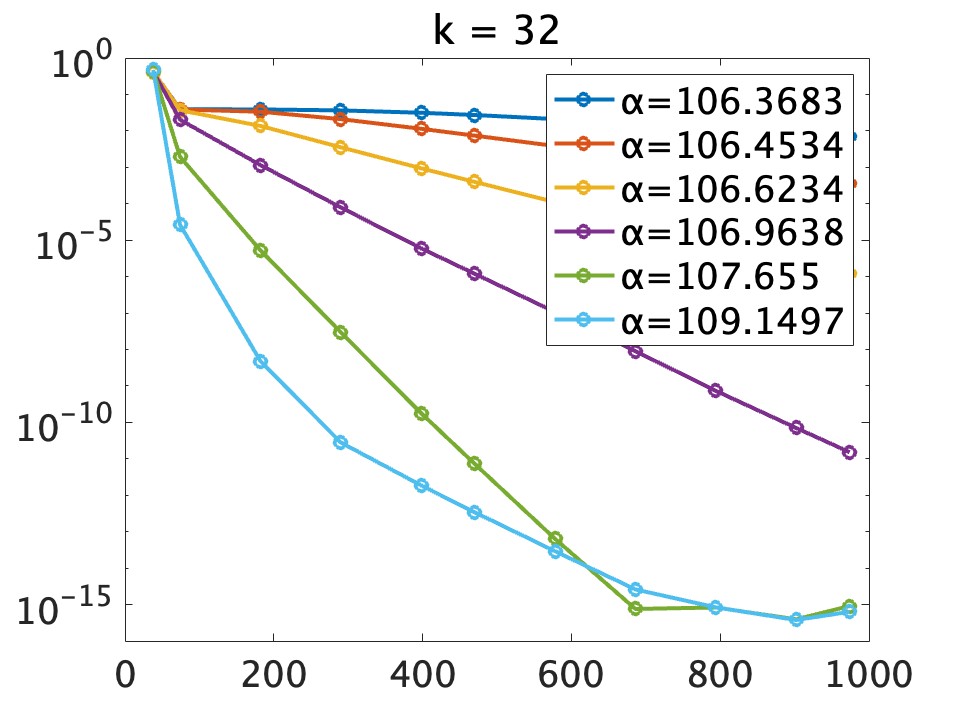}
    %\caption{$A_2$}
  \end{minipage}
  
    \caption{The horizontal axis shows the total number of resolvent calculations (i.e., $4n+N+2=(4+k)n+2$), and the vertical axis shows the absolute error in the matrix 2-norm. The yellow line in the figure ``$k=4$'' is the parameter our method generates, and is used in Section \ref{sec:numerical_experiment_1}.}
  \label{fig:exp_alpha_k}
\end{figure}

\subsection{Comparison of the DE and Gauss-Laguerre Quadrature}
\label{sec:numerical_experiment_DE_GL}

In this section, we compare the double exponential formula with the Gauss-Laguerre quadrature for computing $I_{\alpha}(A)$. We used the test matrices $A_1,\, A_2$ and $A_3$. 

Figure \ref{fig:exp_3} shows the numerical results. 
As is seen in the figures, our DE method generally converges slower than the Gauss-Laguerre, 
especially when the imaginary parts of the eigenvalues are large.
Therefore, Gauss-Laguerre can also be a good choice for computing $I_\alpha(A)$. 
However, the parameter $d$, which is set in Theorem \ref{thm:total_error_of_I_alpha_n_h_for_A}, 
is conservative to derive the error bound, and thus, larger $d$ could be used in practical cases to improve convergence.
Such cases are shown in the bottom two figures of Figure \ref{fig:exp_3}.

We also note the following reasons why we propose the DE formula. 
The computation of the nodes and weights of the Gauss-Laguerre quadrature tends to be unstable when the number of nodes is large, while DE is very stable. The DE formula can be used for adaptive quadrature because it is based on equispaced nodes of the trapezoidal formula (i.e., we can reuse function values when doubling evaluation points).

\begin{figure}[htbp]
  \begin{minipage}[b]{0.49\linewidth}
    \centering
    \includegraphics[bb = 0 0 960 720, width=0.9\linewidth]{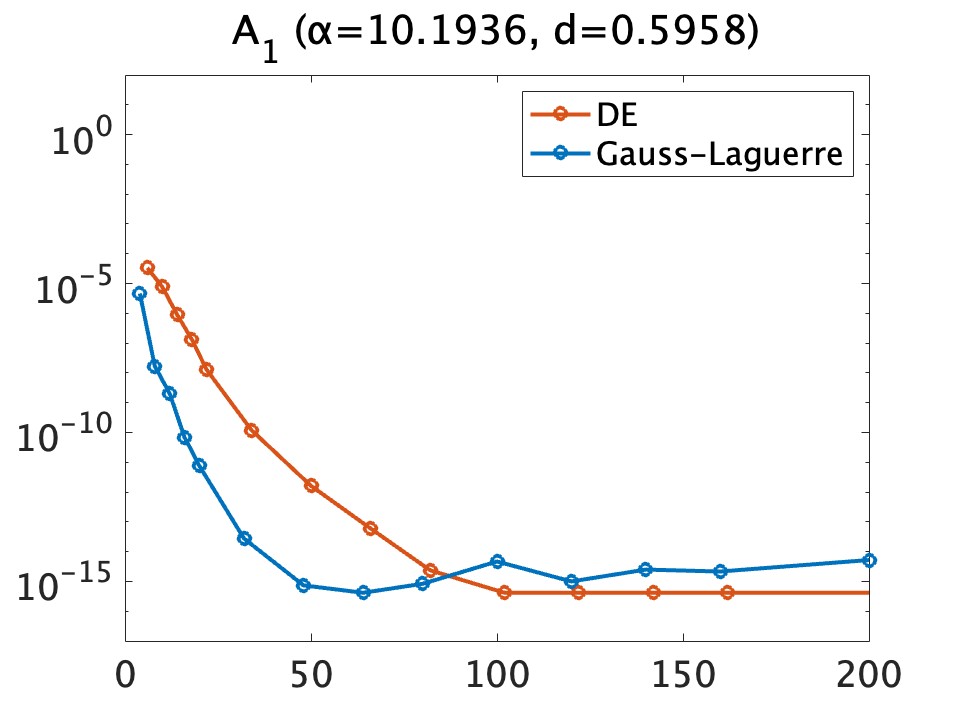}
    %\caption{$A_1$}
  \end{minipage}
  \begin{minipage}[b]{0.49\linewidth}
    \centering
    \includegraphics[bb = 0 0 960 720, width=0.9\linewidth]{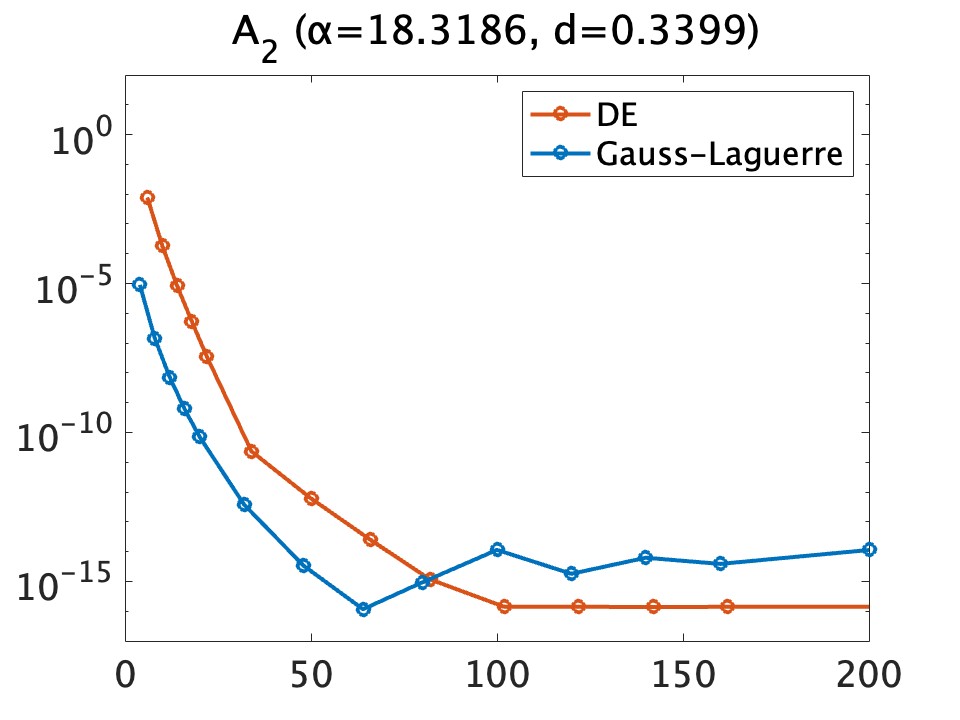}
    %\caption{$A_2$}
\end{minipage}
  \begin{minipage}[b]{0.49\linewidth}
    \centering
    \includegraphics[bb = 0 0 960 720, width=0.9\linewidth]{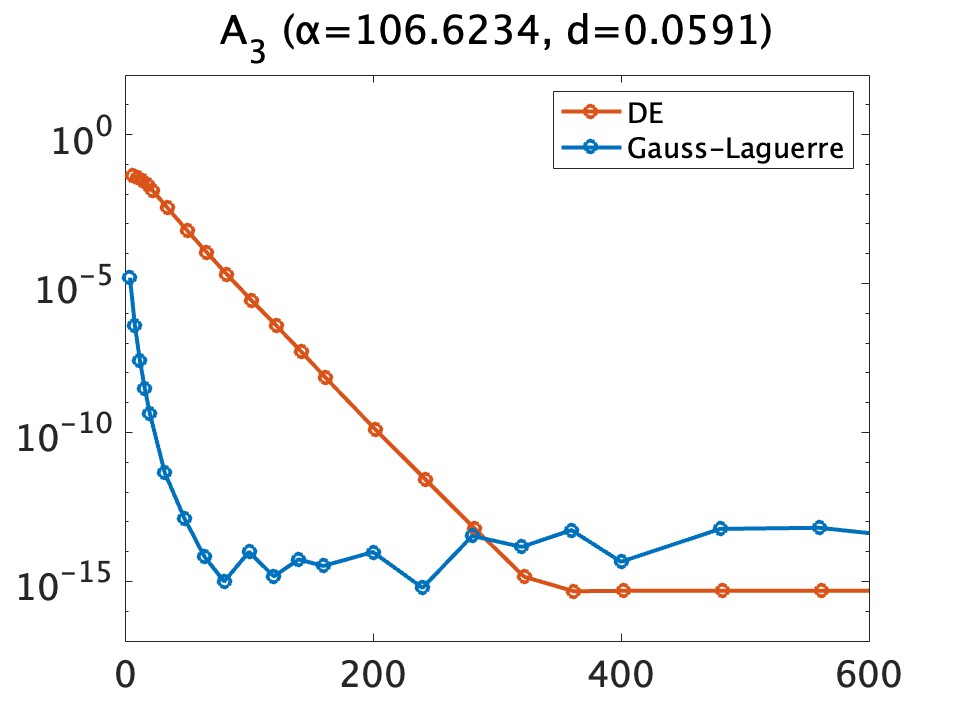}
    %\caption{$A_1$}
  \end{minipage}
  \begin{minipage}[b]{0.49\linewidth}
    \centering
    \includegraphics[bb = 0 0 960 720, width=0.9\linewidth]{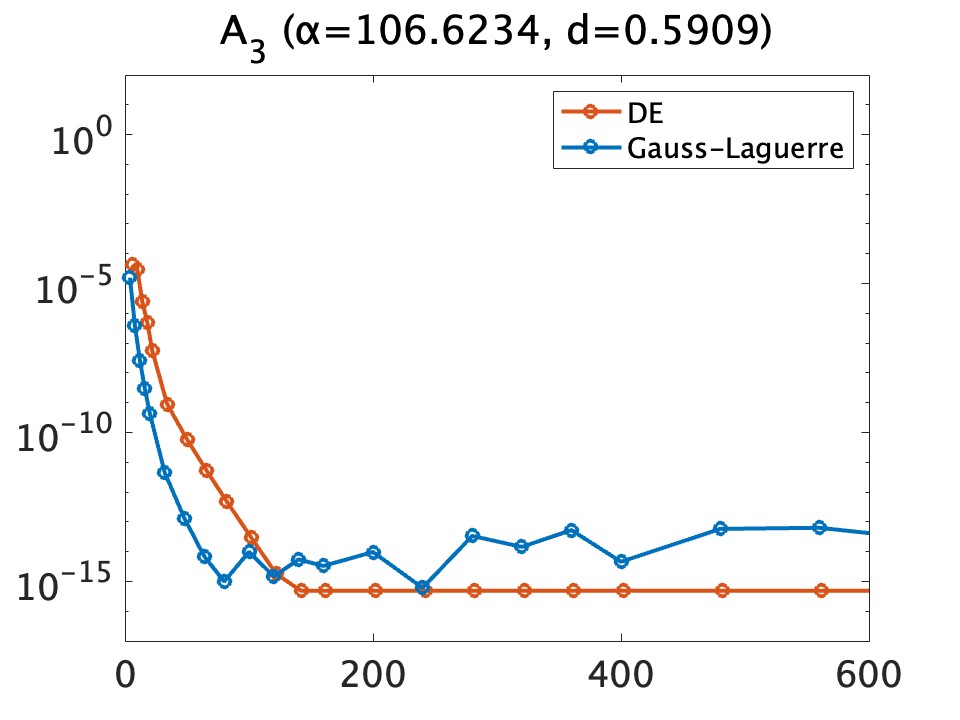}
    %\caption{$A_2$}
\end{minipage}
\caption{The horizontal axis shows the total number of resolvent calculations, 
and the vertical axis shows the absolute error of $I_{\alpha}(A_i)$ in the matrix 2-norm. 
The three figures on the top left, top right, and bottom left show the case of 
$A_1,\, A_2,$ and $A_3$ respectively, 
where the parameters were chosen by our proposed method. 
In the bottom right experiment, we used a larger $d$ for $A_3$.
}
  \label{fig:exp_3}
\end{figure}

%-------
\section{Proofs}
\label{sec:proofs}
%\input{sec_proofs.tex}

%\subsection{Proofs of the lemmas for bounding the error of $\tilde{I}_{\alpha, n ,h}(z)$ (Formula~\eqref{eq:DE_for_I_alpha})}
%---------
\subsection{Proof of Theorem~\ref{thm:total_error_of_I_alpha_n_h}}
\label{sec:proof_total_error_of_I_alpha_n_h}

For $d$ with $0 < d < \pi/2$, 
let $\mathcal{D}_{d}$ be defined by 
\begin{align}
\mathcal{D}_{d}
:=
\{ z \in \mathbb{C} \mid | \mathop{\mathrm{Im}} z | < d \}. 
\label{eq:def_strip_D_d}
\end{align}
To prove Theorem~\ref{thm:total_error_of_I_alpha_n_h}, 
we reduce it to the following theorem 
providing a bound of the error of the DE formula with $\phi$ in~\eqref{eq:def_DE_trans_4}. 

\begin{thm}[{\cite[Theorem 3.3]{tanaoka2023NMVTeng}},{\cite[Theorem 3.1]{tanaka2009function}}]
\label{thm:error_bound_for_DE4}
Let $d$, $K$, and $\beta$ be real numbers with $0 < d < \pi/2$, $K > 0$, and $0 < \beta \leq 1$, respectively. 
Let $f$ be an analytic function in $\phi(\mathcal{D}_{d})$ with
\begin{align}
|f(y)| \leq K \left| \left(\frac{y}{1+y} \right)^{\beta-1} \exp(-\beta y) \right|
\label{eq:f_bound_on_phi_D_d_for_DE4}
\end{align}
for any $y \in \phi(\mathcal{D}_{d})$. 
For an integer $n$ with $n > \beta/(4d)$, 
define $h$ by
\begin{align}
h := \frac{\log (4dn/\beta)}{n}. 
\label{eq:def_general_h_for_DE4}
\end{align}
Then, there exists a real number $C > 0$ independent of $n$ such that
\begin{align}
\left|
\int_{0}^{\infty} f(x) \, \mathrm{d}x 
-
h \sum_{k=-n}^{n} f(\phi(kh)) \phi'(kh)
\right|
\leq
C \exp \left( - \frac{2\pi d n}{\log(4dn/\beta)} \right).
\label{eq:error_bound_for_DE4}
\end{align}
\end{thm}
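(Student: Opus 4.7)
My plan is to reduce the statement to the classical trapezoidal-rule error analysis for analytic functions in a horizontal strip, exploiting the double-exponential decay induced by the transformation $\phi$. After the substitution $y = \phi(t)$, the integral becomes $\int_{-\infty}^{\infty} F(t)\, \mathrm{d}t$ with $F(t) := f(\phi(t))\phi'(t)$, and the DE quadrature is a truncated infinite trapezoidal sum. I split the error as
\begin{align}
\int_{0}^{\infty} f(x)\, \mathrm{d}x - h \sum_{k=-n}^{n} f(\phi(kh))\phi'(kh)
= \left( \int_{-\infty}^{\infty} F(t)\, \mathrm{d}t - h \sum_{k \in \mathbb{Z}} F(kh) \right) + h \sum_{|k|>n} F(kh), \notag
\end{align}
i.e., the usual discretization-error plus truncation-error decomposition, and bound each piece separately.

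Second, I establish that $F$ is analytic on $\mathcal{D}_d$ and admits a useful pointwise bound. Since $d < \pi/2$, the equation $1 + \mathrm{e}^{\pi \sinh t} = 0$ has no solution in $\mathcal{D}_d$ (it would force $|\mathop{\mathrm{Im}} t| \geq \pi/2$), so $\phi$ is analytic on $\mathcal{D}_d$; combined with the analyticity of $f$ on $\phi(\mathcal{D}_d)$, this yields analyticity of $F$. Using the hypothesis on $|f|$ together with the identities $\mathrm{e}^{\phi(t)} = 1 + \mathrm{e}^{\pi \sinh t}$ and $\phi'(t) = \pi \cosh(t)/(1 + \mathrm{e}^{-\pi \sinh t})$, one derives a bound of the form $|F(t)| \leq K \, G_{\beta,d}(t)$ where $G_{\beta,d}$ decays double-exponentially as $\mathop{\mathrm{Re}} t \to \pm \infty$ uniformly for $|\mathop{\mathrm{Im}} t| \leq d$. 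This decay is driven by $\mathop{\mathrm{Re}}\sinh t = \sinh(\mathop{\mathrm{Re}} t)\cos(\mathop{\mathrm{Im}} t)$ with $\cos d > 0$: the factor $\mathrm{e}^{-\beta \phi(t)}$ handles $\mathop{\mathrm{Re}} t \to +\infty$, while the factor $|\phi(t)/(1+\phi(t))|^{\beta-1}$ combined with $|\phi'(t)|$ handles $\mathop{\mathrm{Re}} t \to -\infty$.

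Third, I bound the discretization error by Stenger's classical estimate for the trapezoidal rule on $\mathcal{D}_d$,
\begin{align}
\left| \int_{-\infty}^{\infty} F(t)\, \mathrm{d}t - h \sum_{k \in \mathbb{Z}} F(kh) \right|
\leq \frac{\mathrm{e}^{-2\pi d/h}}{1 - \mathrm{e}^{-2\pi d/h}} \, N_d(F), \notag
\end{align}
where $N_d(F) := \lim_{\epsilon \downarrow 0} \int_{\partial \mathcal{D}_{d-\epsilon}} |F(\zeta)|\,|\mathrm{d}\zeta|$. The pointwise bound from the previous step integrates to $N_d(F) \leq C_1$ for a constant depending only on $K$, $\beta$, and $d$, giving a discretization error of order $\mathrm{e}^{-2\pi d/h}$.

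Finally, I bound the truncation error by summing the pointwise estimate on the real axis: $h \sum_{|k|>n} |F(kh)|$ is dominated by its boundary term, which behaves like $\exp(-\beta \pi \sinh(nh))$ and hence like $\exp\!\big(-(\beta \pi/2)\mathrm{e}^{nh}\big)$ for moderately large $nh$. Choosing $h = \log(4dn/\beta)/n$ makes $\mathrm{e}^{nh} = 4dn/\beta$, so the truncation error is bounded by $\exp(-2\pi d n)$, which is strictly smaller than the discretization bound $\exp(-2\pi d n/\log(4dn/\beta))$. Adding the two pieces yields the claim with $C$ independent of $n$ (but depending on $K$, $\beta$, $d$). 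The main technical obstacle is a clean estimate of $N_d(F)$ near $\mathop{\mathrm{Re}} t = 0$, where the factor $|\phi(t)/(1+\phi(t))|^{\beta-1}$ can be large for $\beta < 1$; this is controlled by the estimate $|\phi(t)| \gtrsim |\pi \sinh t|$ on a bounded neighborhood of the origin, which keeps the apparent singularity integrable on $\partial \mathcal{D}_d$.
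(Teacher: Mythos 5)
This theorem is quoted in the paper as a known result (cited from Tanaka et al.\ 2009 and the 2023 reference) and is not reproved there; your argument is precisely the standard proof used in those sources, namely the discretization-plus-truncation decomposition with Stenger's strip estimate for the trapezoidal rule, double-exponential decay of $F(t)=f(\phi(t))\phi'(t)$ at both ends of $\mathcal{D}_d$, and the balancing choice $\mathrm{e}^{nh}=4dn/\beta$. The proposal is correct and takes essentially the same approach as the cited proof.
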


\begin{rem}
When we consider the map $\phi$ in~\eqref{eq:def_DE_trans_4} for complex variables, 
we need to specify a branch of the Riemann surface of the logarithm function defining $\phi$. 
Throughout this paper, 
we choose the branch
so that $(r, \theta) \mapsto \log (r \mathrm{e}^{\mathrm{i} \theta})$ becomes a continuous function 
on the region $\{ (r, \theta) \in \mathbb{R}^{2} \mid r \in (0, \infty), \ \theta \in \mathbb{R} \}$
with $\log (1 \cdot \mathrm{e}^{\mathrm{i} \cdot 0}) = 0$. 
\end{rem}

\begin{rem}
The variable transformation $t \mapsto \log (1 + \exp((\pi/2) \sinh t))$ is used in  \cite{tanaka2009function}. 
The constant $\pi/2$ in this transformation is different from that of $\phi$ in~\eqref{eq:def_DE_trans_4}, 
which is used in \cite{tanaoka2023NMVTeng}. 
However, 
this difference does not affect essential parts of the proof of Theorem~\ref{thm:error_bound_for_DE4}.
We adopt the settings of \cite{tanaoka2023NMVTeng} in this paper. 
\end{rem}

\begin{rem}
\label{rem:const_of_error_bound}
From the proof of Theorem~\ref{thm:error_bound_for_DE4} in \cite{tanaka2009function}, 
we know that $C$ in~\eqref{eq:error_bound_for_DE4} 
can be given by the product of $K$ in~\eqref{eq:f_bound_on_phi_D_d_for_DE4} 
and a positive real number $c_{d, \beta}$ 
depending only on $d$ and $\beta$. 
\end{rem}

To reduce Theorem~\ref{thm:total_error_of_I_alpha_n_h} to Theorem~\ref{thm:error_bound_for_DE4}, 
we begin with giving a simple region covering $\phi(\mathcal{D}_{d})$. 
To this end, 
for $t$ with $0 < t < \pi/2$, 
we define $s_{\ast}(t)$ by
\begin{align}
s_{\ast}(t) 
:=
\mathrm{arsinh} \left( \frac{\log 2}{\pi \cos t} \right). 
\label{eq:def_s_ast_t}
\end{align}
This is a monotone increasing function of $t$. 
In addition, 
for $x \in \mathbb{R}$ we define $a_{d}(x)$ and $b_{d}(x)$ by
\begin{align}
& a_{d}(x) 
:= 
(\tan d) (x + \log 2) + 2 \pi
\qquad \text{and}
\label{eq:def_a_d_t} \\
& b_{d}(x)
:=
a_{d}(\max \{ 0, x \}), 
\label{eq:def_b_d_t}
\end{align}
respectively. 
By using the function $b_{d}$, we define $\mathcal{H}_{d}$ by
\begin{align}
\mathcal{H}_{d}
:=
\{ \zeta \in \mathbb{C} \mid |\mathop{\mathrm{Im}} \zeta| \leq b_{d}(\mathop{\mathrm{Re}} \zeta) \}. 
\label{eq:region_between_cut_hyperbola}
\end{align}

\begin{rem}
\label{thm:a_d_zero_is_up_bd}
The value $a_{d}(0)$ is an upper bound of $\pi \cosh(s_{\ast}(d)) \sin d + \pi$. 
Indeed, this assertion is shown as follows: 
\begin{align}
\pi \cosh(s_{\ast}(d)) \sin d + \pi
& = \pi \sqrt{1 + \sinh^{2}(s_{\ast}(d))} \sin d + \pi 
\notag \\
& = \pi \sqrt{1 + \left( \frac{\log 2}{\pi \cos d} \right)^{2}} \sin d + \pi 
\notag \\
& = \sqrt{(\tan^{2} d) (\log 2)^{2}  + \pi^{2} \sin^{2} d} + \pi
\notag \\
& \leq 
(\tan d) \log 2  + \pi \sin d + \pi
\leq 
a_{d}(0), 
\label{eq:a_d_zero_is_up_bd}
\end{align}
where the second equality is owing to~\eqref{eq:def_s_ast_t}. 
\end{rem}

We can show that $\mathcal{H}_{d}$ covers $\phi(\mathcal{D}_{d})$. 

\begin{lem}
\label{thm:phi_D_d_in_H_d}
Let $d$ be a real number with $0 < d < \pi/2$. 
Then, we have
\begin{align}
\phi(\mathcal{D}_{d})
\subset
\mathcal{H}_{d}
\label{eq:inclusion_phi_D_d}
\end{align}
where $\phi$ and $\mathcal{H}_{d}$ are defined by~\eqref{eq:def_DE_trans_4} and~\eqref{eq:region_between_cut_hyperbola}, 
respectively. 
\end{lem}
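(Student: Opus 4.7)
The plan is to fix $w = u + \mathrm{i} v \in \mathcal{D}_d$ (so $|v| < d < \pi/2$), write $\pi \sinh w = X + \mathrm{i} Y$ with $X = \pi \sinh u \cos v$ and $Y = \pi \cosh u \sin v$, and set $\phi(w) = P + \mathrm{i} Q$; the goal is then $|Q| \le b_d(P)$. Because $\phi$ is real on $\mathbb{R}$ and the branch fixed in the earlier remark respects complex conjugation, $\phi(\bar w) = \overline{\phi(w)}$, so I may assume $v \ge 0$. Two preliminary facts will be used throughout: first, $1 + \mathrm{e}^{\pm \pi \sinh w} \ne 0$ on $\mathcal{D}_d$, which follows from a direct inspection of the equations $\sinh w = \pm \mathrm{i}(2k+1)$ together with $|v| < \pi/2$, making $\phi$ single-valued and analytic on $\mathcal{D}_d$; second, Remark~\ref{thm:a_d_zero_is_up_bd} gives $a_d(0) \ge \pi \cosh(s_\ast(d)) \sin d + \pi$.

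I then split the argument by the sign of $u$. If $u \le 0$, then $|\mathrm{e}^{\pi \sinh w}| = \mathrm{e}^X \le 1$, so $1 + \mathrm{e}^{\pi \sinh w}$ lies in the closed right half-plane while never touching $0$; the principal branch of $\log$ is analytic there and agrees with our continuous branch of $\phi$ (both are real on $\mathbb{R}$), whence $|Q| = |\arg(1 + \mathrm{e}^{\pi \sinh w})| < \pi/2 \le a_d(0) \le b_d(P)$.

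For $u > 0$, since $|\mathrm{e}^{-\pi \sinh w}| = \mathrm{e}^{-X} < 1$, the point $1 + \mathrm{e}^{-\pi \sinh w}$ sits in the open disk of radius $1$ about $1$, and I will establish the identity $\phi(w) = \pi \sinh w + \log(1 + \mathrm{e}^{-\pi \sinh w})$ with the principal branch of $\log$, valid on the connected region $\{u > 0\} \cap \mathcal{D}_d$ by verifying it on the positive real axis and extending by analytic continuation. This case is split further at $u = s_\ast(d)$. When $u \ge s_\ast(d)$, the definition of $s_\ast$ gives $X \ge \log 2$ and hence $\mathrm{e}^{-X} \le 1/2$, so $P \ge X - \log 2 \ge 0$ and $|Q| \le |Y| + \arcsin(1/2) = |Y| + \pi/6$. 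A short calculus check shows that $v \mapsto \cosh u \sin v - (\tan d) \sinh u \cos v$ is increasing on $[0, d]$ and attains its maximum $\mathrm{e}^{-u} \sin d \le 1$ at $v = d$; therefore $|Y| \le (\tan d) X + \pi$, and combining yields $|Q| \le (\tan d)(P + \log 2) + 2\pi = a_d(P) = b_d(P)$. When $0 < u < s_\ast(d)$, $\cosh u < \cosh(s_\ast(d))$ bounds $|Y| < \pi \cosh(s_\ast(d)) \sin d$, and adding $|\arg(1 + \mathrm{e}^{-\pi \sinh w})| < \pi/2$ gives $|Q| < \pi \cosh(s_\ast(d)) \sin d + \pi/2 \le a_d(0) \le b_d(P)$ by the preliminary consequence of Remark~\ref{thm:a_d_zero_is_up_bd}.

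The hardest part will be the branch bookkeeping in the regime $u > 0$: identifying the continuous extension of $\phi$ from $\phi(0) = \log 2$ with the explicit principal-branch expression $\pi \sinh w + \log(1 + \mathrm{e}^{-\pi \sinh w})$ on $\{u > 0\} \cap \mathcal{D}_d$ requires both the non-vanishing of $1 + \mathrm{e}^{\pm \pi \sinh w}$ and the connectedness of the relevant half-strip, both relying on $d < \pi/2$. Once that identification is in hand, the remaining inequalities in the three subcases are essentially routine.
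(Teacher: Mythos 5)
Your proof is correct but takes a genuinely different route from the paper's. Both start from the same reduction (symmetry in $\mathop{\mathrm{Im}} w$, and the appeal to Remark~\ref{thm:a_d_zero_is_up_bd} for the bound $a_d(0)$), and both split at $u\le 0$, a middle range, and a far range governed by $s_\ast$. The key difference is how the imaginary part of $\phi$ is controlled for $u>0$. The paper works with $\phi(s+\mathrm{i}t)=\log\bigl(1+r\mathrm{e}^{\mathrm{i}\theta}\bigr)$ directly, decomposes $\arg(1+r\mathrm{e}^{\mathrm{i}\theta})$ geometrically as $\theta$ plus the angle between $r\mathrm{e}^{\mathrm{i}\theta}$ and $1+r\mathrm{e}^{\mathrm{i}\theta}$, bounds that angle crudely by $\pi$, and then converts $\theta$ into a bound on $\mathop{\mathrm{Re}}\phi$ via the algebraic relation $\log r=(\theta^2-\pi^2\sin^2 t)^{1/2}/\tan t$. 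You instead establish the factorization $\phi(w)=\pi\sinh w+\log(1+\mathrm{e}^{-\pi\sinh w})$ with the principal branch on the connected half-strip $\{u>0\}\cap\mathcal{D}_d$, which isolates the dominant term $\pi\sinh w$ and makes the correction term manifestly small ($|\arg|<\pi/2$ in general, and $\le\arcsin(1/2)=\pi/6$ once $X\ge\log 2$); you then replace the paper's square-root manipulation with a one-line calculus check that $Y-(\tan d)X\le\pi\mathrm{e}^{-u}\sin d<\pi$ on $[0,d]$. Your thresholds are also simpler: you partition at the fixed value $u=s_\ast(d)$ rather than the $t$-dependent $s=s_\ast(t)$. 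The net effect is that your intermediate bounds are somewhat tighter (e.g.\ $(\tan d)(P+\log 2)+7\pi/6$ instead of the paper's $(\tan d)(P+\log 2)+2\pi$), though both land inside $a_d(P)$; the identity-based approach also avoids the geometric argument-counting the paper needs to justify $|\arg(1+r\mathrm{e}^{\mathrm{i}\theta})|\le\theta+\pi$. The branch bookkeeping you flag as the delicate point is handled correctly: non-vanishing of $1+\mathrm{e}^{\pm\pi\sinh w}$ on $\mathcal{D}_d$ plus agreement with the continuous branch on the real axis and connectedness of each half-strip suffice for the analytic-continuation identification.
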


\begin{proof}
Because it holds for $s ,t \in \mathbb{R}$ that
\begin{align}
1 + \exp(\pi \sinh(s + \mathrm{i} t))
= & \, 
(1 + \mathrm{e}^{\pi \sinh s \cos t} \cos(\pi \cosh s \sin t))
\notag \\
& \, + 
\mathrm{i}\,  
\mathrm{e}^{\pi \sinh s \cos t} \sin(\pi \cosh s \sin t), 
\label{eq:1_exp_complex_expand}
\end{align}
the region $\phi(\mathcal{D}_{d})$ is symmetric with respect to the real line. 
In addition, 
it is clear that $\phi(\mathbb{R}) \subset \mathcal{H}_{d}$. 
Therefore it suffices to show that 
\begin{align}
\phi(\{ s + \mathrm{i} t \in \mathbb{C} \mid s \in \mathbb{R} \})
\subset
\mathcal{H}_{d}
\label{eq:sufficient_for_phi_D_d}
\end{align}
for any $t$ with $0 < t < d$. 
We fix $t$ in this range and define $r(s,t)$ and $\theta(s,t)$ by
\begin{align}
& r(s,t) := \mathrm{e}^{\pi \sinh s \cos t} \qquad \text{and} 
\label{eq:def_r_s_t} \\
& \theta(s,t) := \pi \cosh s \sin t, 
\label{eq:def_theta_s_t}
\end{align}
respectively, 
and omit $(s,t)$ of these functions in the following. 
Then it follows from~\eqref{eq:1_exp_complex_expand} that
\begin{align}
\phi(s + \mathrm{i} t)
& =
\log((1+r \cos \theta) + \mathrm{i} r \sin \theta)
\notag \\
& = 
\frac{1}{2} \log(1 + 2r \cos \theta + r^{2}) + 
\mathrm{i} \, \mathrm{arg}(1+ r \mathrm{e}^{\mathrm{i} \, \theta}), 
\label{eq:phi_s_t_complex_expand}
\end{align}
where $\mathrm{arg}$ is considered in the Riemann surface of the logarithm function
and takes values in $(-\infty, \infty)$. 

We begin with estimating the imaginary part of $\phi(s + \mathrm{i} t)$ 
in~\eqref{eq:phi_s_t_complex_expand}. 
If $s \leq 0$, i.e., $r \leq 1$,  
the real part of  $(1+ r \mathrm{e}^{\mathrm{i} \, \theta})$ is non-negative
and 
$|\mathrm{arg}(1+ r \mathrm{e}^{\mathrm{i} \, \theta})| \leq \pi/2$ holds. 
If $s > 0$, i.e., $r > 1$, 
$\mathrm{arg}(1+ r \mathrm{e}^{\mathrm{i} \, \theta})$
is equal to the sum of $\theta$ and the angle between 
$r \mathrm{e}^{\mathrm{i} \, \theta}$ and 
$(1+ r \mathrm{e}^{\mathrm{i} \, \theta})$. 
Because the absolute value of the latter angle is less than $\pi$, 
we have
$|\mathrm{arg}(1+ r \mathrm{e}^{\mathrm{i} \, \theta})| \leq \theta + \pi$. 
Therefore it holds for any $s \in \mathbb{R}$ that
\begin{align}
| \mathop{\mathrm{Im}} \phi(s + \mathrm{i} t) |
\leq 
\begin{cases}
\pi/2 
& (s \leq 0), \\
\theta + \pi
& (s > 0).
\end{cases}
\label{eq:imag_phi_s_t}
\end{align}

Then 
we bound 
$| \mathop{\mathrm{Im}} \phi(s + \mathrm{i} t) |$ from above to show the relation in~\eqref{eq:sufficient_for_phi_D_d}. 
First we consider the case  $r \leq 2$. 
In this case, 
we have
\begin{align}
\mathrm{e}^{\pi \sinh s \cos t} \leq 2 
\iff 
\sinh s \leq \frac{\log 2}{\pi \cos t}
\iff
s \leq s_{\ast}(t)
\notag
\end{align}
and it follows from the last inequality, \eqref{eq:imag_phi_s_t}, and~\eqref{eq:def_theta_s_t} that
\begin{align}
| \mathop{\mathrm{Im}} \phi(s + \mathrm{i} t) |
\leq 
\begin{cases}
\pi/2 
& (s \leq 0), \\ 
\pi \cosh(s_{\ast}(t)) \sin t + \pi
& (0 < s \leq s_{\ast}(t)).
\end{cases}
\notag
\end{align}
The inequality in the latter case is owing to the monotonicity of $\cosh s$ for $s \geq 0$.
From these inequality and the monotonicity of $\cosh t \sin t$ for $t$ with $0 < t < d$,  
we finally have 
\begin{align}
| \mathop{\mathrm{Im}} \phi(s + \mathrm{i} t) |
\leq 
\pi \cosh(s_{\ast}(d)) \sin d + \pi
\leq a_{d}(0)
\label{eq:imag_phi_s_t_case_r_less_2}
\end{align}
for any $s \leq s_{\ast}(t)$. 
The last inequality is owing to~\eqref{eq:a_d_zero_is_up_bd}. 

Next we consider the case $r > 2$, i.e., $s > s_{\ast}(t)$. 
In this case,
it follows from~\eqref{eq:phi_s_t_complex_expand} that 
\begin{align}
\mathop{\mathrm{Re}} \phi(s + \mathrm{i} t)
& \geq 
\frac{1}{2} \log (1 - 2r + r^{2}) 
= 
\log(r - 1)
\geq
\log(r/2)
= 
\log r - \log 2, 
\label{eq:Re_phi_log_r}
\end{align}
which implies $\mathop{\mathrm{Re}} \phi(s + \mathrm{i} t) \geq 0$. 
Furthermore, 
by~\eqref{eq:def_r_s_t} and~\eqref{eq:def_theta_s_t}, 
$\log r$ is given by 
\begin{align}
\log r 
& = 
\pi \sinh s \cos t
= 
\pi \cos t \, (\cosh^{2} s - 1)^{1/2}
\notag \\
& = 
\pi \cos t \left(
\frac{\theta^{2}}{\pi^{2} \sin^{2} t} - 1
\right)^{1/2}
=
\frac{1}{\tan t} 
\left(
\theta^{2} - \pi^{2} \sin^{2} t
\right)^{1/2}. 
\label{eq:log_r_low_bd}
\end{align}
From~\eqref{eq:Re_phi_log_r} and~\eqref{eq:log_r_low_bd}, 
we get
\begin{align}
& (\tan^{2} t) (\mathop{\mathrm{Re}} \phi(s + \mathrm{i} t) + \log 2)^{2}
\geq 
\theta^{2} - \pi^{2} \sin^{2} t
\notag \\
& \iff
\theta^{2} 
\leq 
(\tan^{2} t) (\mathop{\mathrm{Re}} \phi(s + \mathrm{i} t) + \log 2)^{2} + \pi^{2} \sin^{2} t. 
\label{eq:theta_upper_bound}
\end{align}
Then, we can derive from~\eqref{eq:theta_upper_bound} and~\eqref{eq:imag_phi_s_t} a relation of 
$\mathop{\mathrm{Re}} \phi(s + \mathrm{i} t)$ and $\mathop{\mathrm{Im}} \phi(s + \mathrm{i} t)$ as
\begin{align}
|\mathop{\mathrm{Im}} \phi(s + \mathrm{i} t)|
& \leq
\sqrt{(\tan^{2} t) (\mathop{\mathrm{Re}} \phi(s + \mathrm{i} t) + \log 2)^{2} + \pi^{2}} + \pi
\notag \\
& \leq 
\sqrt{(\tan^{2} d) (\mathop{\mathrm{Re}} \phi(s + \mathrm{i} t) + \log 2)^{2} + \pi^{2}} + \pi
\notag \\
& \leq 
(\tan d) (\mathop{\mathrm{Re}} \phi(s + \mathrm{i} t) + \log 2) + \pi + \pi 
\notag \\
& = 
a_{d}(\mathop{\mathrm{Re}} \phi(s + \mathrm{i} t)). 
\label{eq:imag_phi_s_t_case_r_greater_2}
\end{align}

From Inequalities~\eqref{eq:imag_phi_s_t_case_r_less_2} and~\eqref{eq:imag_phi_s_t_case_r_greater_2}, 
we have 
\begin{align}
|\mathop{\mathrm{Im}} \phi(s + \mathrm{i} t)| 
\leq 
\max\{ a_{d}(0), a_{d}(\mathop{\mathrm{Re}} \phi(s + \mathrm{i} t)) \}. 
\notag
\end{align}
Because $a_{d}$ is monotone increasing, 
the right-hand side is equal to 
$a_{d}(\max\{0, \mathop{\mathrm{Re}} \phi(s + \mathrm{i} t) \}) = b_{d}(\mathop{\mathrm{Re}} \phi(s + \mathrm{i} t))$. 
Thus we get the relation in~\eqref{eq:sufficient_for_phi_D_d}. 
\end{proof}

By using Lemma~\ref{thm:phi_D_d_in_H_d}, 
we find a real number $d$ such that $f_{\alpha}(z, \cdot)$ 
defined by~\eqref{eq:def_integrand_of_non_osc} is analytic in $\phi(\mathcal{D}_{d})$. 

\begin{lem}
\label{thm:appropriate_d_for_D_d}
Let $z$ be a complex number with $\mathop{\mathrm{Re}} z < 0$ and 
let $\alpha$ be a real number with $|\mathop{\mathrm{Im}} z| + 2\pi < \alpha$. 
Then, the function $f_{\alpha}(z, \cdot)$ defined by~\eqref{eq:def_integrand_of_non_osc} 
is analytic in $\phi(\mathcal{D}_{d})$ for a real number $d$ satisfying~\eqref{eq:appropriate_d_for_D_d}, i.e.,
\begin{align}
0 < d < \mathrm{arctan} \left( \frac{\alpha - |\mathop{\mathrm{Im}} z| - 2 \pi}{- \mathop{\mathrm{Re}} z + \log 2} \right). 
%\label{eq:appropriate_d_for_D_d}
\notag
\end{align}
\end{lem}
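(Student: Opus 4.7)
The plan is to identify the singularities of $f_{\alpha}(z,\cdot)$ explicitly and then invoke Lemma~\ref{thm:phi_D_d_in_H_d} to replace the complicated image region $\phi(\mathcal{D}_{d})$ by the elementary set $\mathcal{H}_{d}$, on which non-containment of the singularities reduces to a direct algebraic computation with $a_{d}$.

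From \eqref{eq:def_integrand_of_non_osc}, the only singularities of $f_{\alpha}(z,\cdot)$ in $\mathbb{C}$ are the simple poles at $x=-z+\mathrm{i}\alpha$ and $x=-z-\mathrm{i}\alpha$; the factor $\mathrm{e}^{-x}$ and the constants $\mathrm{e}^{\pm \mathrm{i}\alpha}$ are entire. First I would observe that by Lemma~\ref{thm:phi_D_d_in_H_d} we have $\phi(\mathcal{D}_{d}) \subset \mathcal{H}_{d}$, so it suffices to check that neither of these two poles lies in $\mathcal{H}_{d}$. Both poles have real part $-\mathop{\mathrm{Re}} z > 0$ by hypothesis, so evaluating $b_{d}$ at this real part collapses the $\max$ in \eqref{eq:def_b_d_t} and yields $b_{d}(-\mathop{\mathrm{Re}} z) = a_{d}(-\mathop{\mathrm{Re}} z) = (\tan d)(-\mathop{\mathrm{Re}} z + \log 2) + 2\pi$.

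The non-containment in $\mathcal{H}_{d}$ therefore amounts to the pair of inequalities
\[
|\pm \alpha - \mathop{\mathrm{Im}} z| > (\tan d)(-\mathop{\mathrm{Re}} z + \log 2) + 2\pi.
\]
Under the hypothesis $|\mathop{\mathrm{Im}} z| < \alpha$, both $\alpha - \mathop{\mathrm{Im}} z$ and $\alpha + \mathop{\mathrm{Im}} z$ are strictly positive, so the smaller of the two absolute values equals $\alpha - |\mathop{\mathrm{Im}} z|$. The requirement then reduces to
\[
(\tan d)(-\mathop{\mathrm{Re}} z + \log 2) < \alpha - |\mathop{\mathrm{Im}} z| - 2\pi,
\]
and because the right-hand side is positive by the assumption $|\mathop{\mathrm{Im}} z| + 2\pi < \alpha$ and the coefficient $-\mathop{\mathrm{Re}} z + \log 2$ is positive, dividing through and applying $\arctan$ gives exactly the upper bound on $d$ in \eqref{eq:appropriate_d_for_D_d}.

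Since the substantive geometric content is already isolated in Lemma~\ref{thm:phi_D_d_in_H_d}, the remaining work is essentially bookkeeping and I do not anticipate a genuine obstacle. The only points requiring mild care are ensuring that $b_{d}$ is evaluated on the nontrivial branch of its piecewise definition (which follows immediately from $\mathop{\mathrm{Re}} z < 0$) and maintaining strict inequalities throughout, so that the two poles are kept strictly outside the closed set $\mathcal{H}_{d}$ and hence outside $\phi(\mathcal{D}_{d})$.
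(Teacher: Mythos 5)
Your proof is correct and follows exactly the paper's argument: identify the two simple poles of $f_{\alpha}(z,\cdot)$, use Lemma~\ref{thm:phi_D_d_in_H_d} to reduce analyticity on $\phi(\mathcal{D}_{d})$ to the poles lying outside $\mathcal{H}_{d}$, evaluate $b_{d}$ at $-\mathop{\mathrm{Re}} z>0$, and rearrange the resulting inequality into the $\arctan$ bound. The bookkeeping details, including the reduction of $\min\{|\alpha-\mathop{\mathrm{Im}} z|, |\alpha+\mathop{\mathrm{Im}} z|\}$ to $\alpha-|\mathop{\mathrm{Im}} z|$ under the hypothesis $|\mathop{\mathrm{Im}} z|<\alpha$, match the paper's.
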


\begin{proof}
Let $u = \mathop{\mathrm{Re}} z$ and $v = \mathop{\mathrm{Im}} z$. 
As indicated by the domain of $f_{\alpha}(z, \cdot)$ in~\eqref{eq:domain_of_f_alpha}, 
the singularities of $f_{\alpha}(z, \cdot)$ are 
\[
 - u - \mathrm{i} (v \pm \alpha). 
\] 
By Lemma~\ref{thm:phi_D_d_in_H_d}, 
the function $f_{\alpha}(z, \cdot)$ is analytic in $\phi(\mathcal{D}_{d})$
if these singularities are not in $\mathcal{H}_{d}$. 
This condition is given by
\begin{align}
|v\pm \alpha| > b_{d}(- u), 
\notag
\end{align}
where $b_{d}$ is defined by~\eqref{eq:def_b_d_t}. 
Because $\alpha > |v|$ and $u< 0$, this condition is equivalent to 
$\alpha - |v| > a_{d}(-u)$,
where $a_{d}$ is defined by~\eqref{eq:def_a_d_t}. 
This condition is equivalent to,
\begin{align}
& \alpha - |v| > (\tan d) (-u + \log 2) + 2 \pi
\notag \\
& \iff 
\frac{\alpha - |v| - 2 \pi}{-u + \log 2} > \tan d. 
\label{eq:tan_d_condition}
\end{align}
This inequality implies the conclusion. 
\end{proof}

\begin{rem}
According to Lemma~\ref{thm:appropriate_d_for_D_d}, 
we need to choose $\alpha$ with $\alpha > 2\pi$ even if $\mathop{\mathrm{Im}} z = 0$.
This is owing to two overestimates in the proof of Lemma~\ref{thm:phi_D_d_in_H_d}
that are done for simplicity. 
First, 
we bound the angle between $r \mathrm{e}^{\mathrm{i} \theta}$ and $(1 + r \mathrm{e}^{\mathrm{i} \theta})$ by $\pi$. 
Second, 
we bound the term $\pi^{2} \sin^{2} d$ by $\pi^{2}$ in~\eqref{eq:imag_phi_s_t_case_r_greater_2}. 
However, 
thanks to these overestimates, 
we can get the simple formula for the range of $d$ as shown in~\eqref{eq:appropriate_d_for_D_d}. 
\end{rem}

We are in a position to prove Theorem~\ref{thm:total_error_of_I_alpha_n_h}. 
We apply Theorem~\ref{thm:error_bound_for_DE4} 
to the function $f_{\alpha}(z, \cdot)$ defined by~\eqref{eq:def_integrand_of_non_osc}. 

\begin{proof}[Proof of Theorem~\ref{thm:total_error_of_I_alpha_n_h}]
We bound 
\(
|f_{\alpha}(z, x)|
\)
for $x = \phi(s \pm \mathrm{i} t)$ with $s \in \mathbb{R}$ and $t \in (-d, d)$.  
To this end, we bound the absolute value of 
\begin{align}
\frac{1}{z\pm\mathrm{i}\alpha+x}= \frac{1}{x-(-z\mp \mathrm{i}\alpha)}\label{eq:part_to_frac}
\end{align}

Let $u = \mathop{\mathrm{Re}} z$ and $v = \mathop{\mathrm{Im}} z$. 
Then, 
by Lemmas~\ref{thm:phi_D_d_in_H_d}, ~\ref{thm:appropriate_d_for_D_d},
and the fact that $\mathcal{H}_d$ is symmetric with respect to the real axis, we have
\begin{align}
|x-(-z\mp \mathrm{i}\alpha))| 
& \geq 
\min \{ |x'-(-u- \mathrm{i}(v \pm \alpha))|  \mid x' \in \mathcal{H}_{d} \}
\notag \\
& \geq 
\min \{ |x' - (-u - \mathrm{i} (|v|\pm\alpha))| \mid x' \in \mathcal{H}_{d} \}. 
\label{eq:dist_between_sp_and_H_d}
\end{align}
By simple arguments of elementary geometry, 
we can show that the minimum value in the right-hand side of~\eqref{eq:dist_between_sp_and_H_d} is 
achieved by the distance between 
$(-u + \mathrm{i} (\alpha - |v|))$ and the half line 
$\{ p + \mathrm{i} \, a_{d}(p) \in \mathbb{C} \mid p \in [0, \infty) \}$, 
where $a_{d}$ is defined by~\eqref{eq:def_a_d_t}. 
See Figure~\ref{fig:distance} for this argument. 
Because the distance is equal to that between 
$(-u, \alpha - |v|)$ and $\{ (p, q) \in \mathbb{R}^{2} \mid (\tan d) p - q + (\tan d) \log 2 + 2 \pi = 0 \}$ in $\mathbb{R}^{2}$, 
it is given by
\begin{align}
& \frac{| - (\tan d) u - \alpha + |v| + (\tan d) \log 2 + 2 \pi|}{\sqrt{\tan^{2} d + 1}}
\notag \\
& = 
\frac{\alpha - |v| - 2 \pi - (\tan d) (-u + \log 2) }{1/\cos d}
\notag \\
& = 
(\alpha - |v| - 2 \pi) \cos d - (-u + \log 2) \sin d, 
\label{eq:formula_of_dist_between_sp_and_H_d}
\end{align}
which is positive because of~\eqref{eq:tan_d_condition}. 
Therefore we have
\begin{align}
|x-(-z\mp \mathrm{i}\alpha))| 
\geq
(\alpha - |v| - 2 \pi) \cos d - (-u + \log 2) \sin d. 
\label{eq:dist_y_and_ialpha_z}
\end{align}
Then, we deduce from~\eqref{eq:part_to_frac} that
\begin{align}
\left|\frac{1}{z\pm\mathrm{i}\alpha+x}\right|
\leq 
\frac{1}{(\alpha - |\mathop{\mathrm{Im}} z| - 2 \pi) \cos d - (-\mathop{\mathrm{Re}} z + \log 2) \sin d}. 
\label{eq:bound_of_frac_parts_of_f_alpha}
\end{align}

\begin{figure}
\centering
\includegraphics[bb = 0 0 256 255, width=0.5\linewidth]{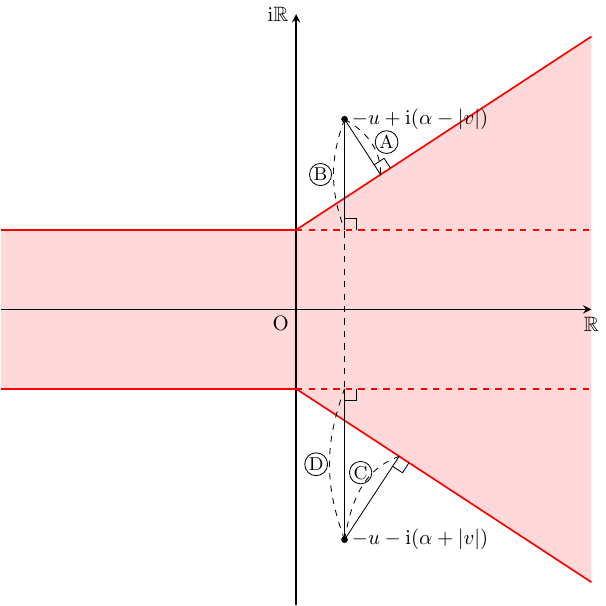}
\caption{
Explanation of the fact that the minimum value in the right-hand side of~\eqref{eq:dist_between_sp_and_H_d}
is given by~\eqref{eq:formula_of_dist_between_sp_and_H_d}.
The minimum value is the minimum distance between the two singular points 
(i.e., $-u + \mathrm{i} (\alpha - |v|)$ and $-u - \mathrm{i} (\alpha + |v|)$) 
and the boundary of the region $\mathcal{H}_{d}$. 
Candidates of the distance are A to D.
First, it clearly holds that A $\leq$ B, C $\leq $ D. 
Since $\mathcal{H}_d$ is symmetric w.r.t. the real axis, we have A $\leq$ C.
Therefore the minimum distance is given by A.  
}
\label{fig:distance}
\end{figure}

Then, 
by~\eqref{eq:bound_of_frac_parts_of_f_alpha} and 
the definition of $f_{\alpha}(z, \cdot)$ in~\eqref{eq:def_integrand_of_non_osc}, 
we have
\begin{align}
|f_{\alpha}(z, y)| 
\leq
K_{z, \alpha, d} \, |\exp(-y)|,
\label{eq:bound_of_f_alpha_z}
\end{align}
where $K_{z, \alpha, d}$ is defined by~\eqref{eq:def_of_K_alpha_z}, i.e., 
\begin{align}
K_{z, \alpha, d} 
= 
\frac{1/\pi}{(\alpha - |\mathop{\mathrm{Im}} z| - 2 \pi) \cos d - (-\mathop{\mathrm{Re}} z + \log 2) \sin d}.
%\label{eq:def_of_K_alpha_z}
\notag
\end{align}
It follows from~\eqref{eq:bound_of_f_alpha_z} and Lemma~\ref{thm:appropriate_d_for_D_d} that
the function $f_{\alpha}(z, \cdot)$ satisfies the assumption of Theorem~\ref{thm:error_bound_for_DE4}
with $K = K_{z, \alpha, d}$ and $\beta = 1$. 
Therefore
we have the error estimate in~\eqref{eq:total_error_of_I_alpha_n_h}
for $h = \log(4dn)/n$ in~\eqref{eq:def_of_h}
by taking Remark~\ref{rem:const_of_error_bound} into account. 
%\begin{align}
%| I_{\alpha}(z) - \tilde{I}_{\alpha, n, h}(z) |
%\leq 
%c_{d} K_{z, \alpha, d} 
%\exp \left(
%- \frac{2\pi d n}{\log(4dn)}
%\right),
%\notag 
%\end{align}
\end{proof}

%---------
\subsection{Proof of Theorem~\ref{thm:total_error_of_J_alpha_N}}
\label{sec:proof_total_error_of_J_alpha_N}

To prove Theorem~\ref{thm:total_error_of_J_alpha_N}, 
we reduce it to Theorem~\ref{thm:error_of_GL} below 
that provides a bound of the error of the Gauss-Legendre formula. 
To describe this theorem, 
for a real number $\rho$ with $\rho > 1$, 
we define $\mathcal{E}_{\rho}$ by 
\begin{align}
\mathcal{E}_{\rho}
:=
\left\{ 
\left.
z = \frac{1}{2} \left( u + \frac{1}{u} \right)
\in 
\mathbb{C} \, 
\right| \, 
u = \rho \mathrm{e}^{\mathrm{i} \theta}, \ 
0 \leq \theta < 2\pi
\right\}.
\label{eq:def_Bernstein_ell}
\end{align}
This is known as the Bernstein ellipse, 
which is an ellipse with foci $\pm 1$, 
semi-major axis $(\rho+\rho^{-1})/2$, and 
semi-minor axis  $(\rho-\rho^{-1})/2$. 
Let $\mathcal{B}_{\rho}$ be the inside of $\mathcal{E}_{\rho}$, 
i.e., 
the open region containing the origin with boundary $\mathcal{E}_{\rho}$. 
The region $\mathcal{B}_{\rho}$ contains the interval $[-1,1]$. 

\begin{thm}[{\cite[Ineq.~(18)]{rabinowitz1969rough}}, {\cite[Theorem 19.3]{trefethen2019approximation}}]
\label{thm:error_of_GL}
Let $g$ be an analytic function in $\mathcal{B}_{\rho}$ for $\rho > 1$
and 
there exists a real number $C > 0$ such that $|g(z)| \leq C$ for any $z \in \overline{\mathcal{B}_{\rho}}$. 
Then the $N$-point Gauss--Legendre quadrature formula with $N \geq 2$ satisfies
\begin{align}
\left| \int_{-1}^{1} g(t) \, \mathrm{d}t 
- 
\sum_{i=1}^{N} w_{i} \, g(t_{i})
\right|
\leq
\frac{64}{15} \frac{C \rho^{-2(N-1)}}{\rho^{2} - 1}, 
\label{eq:general_error_of_GL}
\end{align}
where $w_{i}$ and $t_{i}$ are the weights and nodes of the $N$-point Gauss--Legendre quadrature formula, 
respectively. 
\end{thm}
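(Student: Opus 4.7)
The plan is to expand $g$ in its Chebyshev series and exploit the polynomial exactness of the $N$-point Gauss-Legendre rule. Since $g$ is analytic in $\mathcal{B}_{\rho}$ with $|g| \leq C$ on $\overline{\mathcal{B}_{\rho}}$, I write
\[
g(x) = \sum_{k=0}^{\infty} a_{k} T_{k}(x)
\]
as a uniformly convergent Chebyshev series on $[-1,1]$, with $|a_{0}| \leq C$ and $|a_{k}| \leq 2 C \rho^{-k}$ for $k \geq 1$. These estimates follow from the contour representation of the Chebyshev coefficients on the circle $|u| = \rho$ pushed through the Joukowsky map $u \mapsto (u + u^{-1})/2$, whose image is precisely $\mathcal{E}_{\rho}$, together with the hypothesis $|g| \leq C$ on that ellipse.

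Let $E_{N}(f) := \int_{-1}^{1} f(t)\,\mathrm{d}t - \sum_{i=1}^{N} w_{i} f(t_{i})$ denote the quadrature error functional. Because the $N$-point Gauss-Legendre rule is exact on polynomials of degree at most $2N-1$, we have $E_{N}(T_{k}) = 0$ for $0 \leq k \leq 2N-1$. Applying $E_{N}$ termwise to the uniformly convergent series (justified by $|T_{k}(x)| \leq 1$ on $[-1,1]$ and the absolute convergence $\sum |a_{k}| < \infty$) gives
\[
E_{N}(g) = \sum_{k=2N}^{\infty} a_{k}\, E_{N}(T_{k}).
\]

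The crucial step is a sharp bound on $|E_{N}(T_{k})|$ for $k \geq 2N$. The trivial triangle-inequality estimate $|E_{N}(T_{k})| \leq |\int_{-1}^{1} T_{k}| + \sum_{i} w_{i} \leq 4$ yields only $\sum_{k \geq 2N} 8 C \rho^{-k} = O(\rho^{-2N+1}/(\rho-1))$, which has the \emph{wrong} denominator. To obtain the $1/(\rho^{2}-1)$ denominator, I would use finer structure: the closed-form $\int_{-1}^{1} T_{k}(x)\,\mathrm{d}x = 2/(1-k^{2})$ for even $k$ (and $0$ for odd $k$), together with the node/weight symmetries $t_{i} = -t_{N+1-i}$, $w_{i} = w_{N+1-i}$ of the Gauss nodes, which kill the odd-index contributions and yield an analogous $O(1/(k^{2}-1))$ estimate for $|\sum_{i} w_{i} T_{k}(t_{i})|$. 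Equivalently, one exploits the contour representation
\[
E_{N}(f) = \frac{1}{2 \pi \mathrm{i}} \oint_{\mathcal{E}_{\rho}} K_{N}(z) f(z)\, \mathrm{d}z,
\]
where $K_{N}$ is expressible through the Legendre function of the second kind divided by the Legendre polynomial $P_{N}$, and satisfies a sharp decay $|K_{N}(z)| = O(\rho^{-2N})$ uniformly on $\mathcal{E}_{\rho}$.

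Summing the resulting series, the pairing/cancellation collapses the naive $\sum \rho^{-k}$ into $\sum_{m=N}^{\infty} \rho^{-2m} = \rho^{-2(N-1)}/(\rho^{2}-1)$, producing the stated form. The main obstacle is precisely step three: the numerical constant $64/15$ and the quadratic denominator $\rho^{2}-1$ both arise from the sharp Legendre-function estimate on $\mathcal{E}_{\rho}$ (or an equivalent direct estimate of the paired quadrature error on $T_{k}$), which is the technical heart of Rabinowitz's original derivation and is where the specific numerical constant in the bound is extracted.
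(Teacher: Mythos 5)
Your skeleton (Chebyshev expansion, the coefficient decay $|a_k|\le 2C\rho^{-k}$, exactness on polynomials of degree $2N-1$, termwise application of the error functional $E_N$) is the right one, but the proof is not closed: you explicitly leave the decisive estimate on $|E_N(T_k)|$ as ``the main obstacle,'' and the two mechanisms you gesture at do not work as stated. The claim that node/weight symmetry yields an $O(1/(k^2-1))$ bound for $\bigl|\sum_i w_i T_k(t_i)\bigr|$ is false: since the Gauss--Legendre weights are positive and sum to $2$, this quadrature sum is bounded by $2$ and comes arbitrarily close to $2$ along subsequences of $k$ (there is no exact aliasing at Gauss nodes, unlike at Chebyshev points). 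The contour-kernel route via the Legendre function of the second kind is viable but you do not carry out the estimate, so as written both the constant $64/15$ and the denominator $\rho^2-1$ remain unproven.

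The irony is that the ``trivial triangle-inequality estimate'' you dismiss is exactly what is needed, once combined with the parity observation you already made. By oddness of $T_k$ and the symmetry of the nodes and weights, $E_N(T_k)=0$ for all odd $k$, so the tail sum runs only over even $k=2m$ with $m\ge N$, and $\sum_{m\ge N}\rho^{-2m}=\rho^{-2(N-1)}/(\rho^2-1)$: the quadratic denominator comes from this parity restriction, not from any refined decay of $E_N(T_k)$. For even $k\ge 2N\ge 4$ one simply has
\begin{align}
|E_N(T_k)|\le \Bigl|\int_{-1}^{1}T_k(t)\,\mathrm{d}t\Bigr|+\Bigl|\sum_{i=1}^{N} w_i T_k(t_i)\Bigr|\le \frac{2}{k^2-1}+2\le \frac{2}{15}+2=\frac{32}{15},
\notag
\end{align}
whence $|E_N(g)|\le \frac{32}{15}\sum_{k\ge 2N,\ k\ \mathrm{even}}2C\rho^{-k}=\frac{64}{15}\,C\rho^{-2(N-1)}/(\rho^2-1)$, which is the assertion. (Indeed $2\bigl(2+\tfrac{2}{4N^2-1}\bigr)$ is exactly the $N$-dependent constant in Rabinowitz's inequality (18), and $64/15$ is its value at $N=2$.) For comparison, the paper does not reprove the theorem at all: it quotes Rabinowitz's bound verbatim and merely observes that this constant is at most $64/15$ for all $N\ge 2$.
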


\begin{rem}
Rabinowitz \cite[Ineq.~(18)]{rabinowitz1969rough} presents the upper bound
\begin{align}
2 \left( 2 + \frac{2}{4N^{2} - 1} \right) \left(\max_{z \in \mathcal{E}_{\rho}}|g(z)| \right) \frac{\rho^{-2(N-1)}}{\rho^{2} - 1}
\notag
\end{align}
for the $N$-point formula. 
Because 
\begin{align}
2 \left( 2 + \frac{2}{4N^{2} - 1} \right) 
\leq 
2 \left( 2 + \frac{2}{4\cdot2^{2} - 1} \right) 
= 
\frac{64}{15}
\notag
\end{align}
holds for any $N \geq 2$, 
we can get \eqref{eq:general_error_of_GL}. 
\end{rem}

To apply Theorem~\ref{thm:error_of_GL}
to $g_{\alpha}(z, \cdot)$ defined by \eqref{eq:def_hat_g_alpha_z}, 
we find a real number $\rho$ such that this function satisfies the assumption of this theorem. 

\begin{lem}
\label{thm:rho_hat_g_analytic}
Let $z$ be a complex number with $\mathop{\mathrm{Re}} z < 0$ and 
let $\alpha$ be a real number with $|\mathop{\mathrm{Im}} z| < \alpha$. 
In addition, 
let $\delta$ be a real number with $0 < \delta < |\mathop{\mathrm{Re}} z|/\alpha$. 
Then the function $g_{\alpha}(z, \cdot)$ defined by \eqref{eq:def_hat_g_alpha_z} 
is analytic in $\mathcal{B}_{\rho_{z, \alpha, \delta}}$ with 
$\rho_{z, \alpha, \delta}$ defined by \eqref{eq:def_rho_z_alpha}, i.e., 
\begin{align}
\rho_{z, \alpha, \delta} 
= 
|\mathop{\mathrm{Re}} z|/\alpha - \delta + \sqrt{(|\mathop{\mathrm{Re}} z|/\alpha - \delta)^{2} + 1}.
\notag
\end{align}
\end{lem}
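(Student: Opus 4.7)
The plan is to locate the singularities of $g_{\alpha}(z,\cdot)$ explicitly and then show that they fall outside the closed Bernstein ellipse associated with $\rho_{z,\alpha,\delta}$. Since the factor $\mathrm{e}^{\mathrm{i}\alpha x}$ is entire, the only singularity of $g_{\alpha}(z, x) = (\alpha/(2\pi))\,\mathrm{e}^{\mathrm{i}\alpha x}/(\mathrm{i}\alpha x - z)$ is the simple pole at $x_{*} := -\mathrm{i} z/\alpha$. Writing $z = u + \mathrm{i} v$ with $u < 0$, a direct computation gives
\begin{align}
x_{*} = \frac{v}{\alpha} + \mathrm{i}\,\frac{|u|}{\alpha} = \frac{\mathop{\mathrm{Im}} z}{\alpha} + \mathrm{i}\,\frac{|\mathop{\mathrm{Re}} z|}{\alpha}.
\notag
\end{align}

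The next step is to recall that the Bernstein ellipse $\mathcal{E}_{\rho}$ has semi-major axis $a = (\rho+\rho^{-1})/2$ and semi-minor axis $b = (\rho-\rho^{-1})/2$, and to compute these for $\rho = \rho_{z,\alpha,\delta}$. Setting $s := |\mathop{\mathrm{Re}} z|/\alpha - \delta$, so that $\rho_{z,\alpha,\delta} = s + \sqrt{s^{2}+1}$, one checks by multiplying that $\rho_{z,\alpha,\delta}^{-1} = \sqrt{s^{2}+1} - s$, whence $b = s = |\mathop{\mathrm{Re}} z|/\alpha - \delta$. The assumption $0 < \delta < |\mathop{\mathrm{Re}} z|/\alpha$ ensures $b > 0$ and $\rho_{z,\alpha,\delta} > 1$, so the ellipse is non-degenerate.

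Finally, I would compare $x_{*}$ with this ellipse. Since
\begin{align}
|\mathop{\mathrm{Im}} x_{*}| = \frac{|\mathop{\mathrm{Re}} z|}{\alpha} > \frac{|\mathop{\mathrm{Re}} z|}{\alpha} - \delta = b,
\notag
\end{align}
the imaginary part of $x_{*}$ already exceeds the semi-minor axis, so the ellipse inequality $(\mathop{\mathrm{Re}} x_{*})^{2}/a^{2} + (\mathop{\mathrm{Im}} x_{*})^{2}/b^{2} < 1$ fails; in fact the second term alone is strictly greater than $1$. Hence $x_{*} \notin \overline{\mathcal{B}_{\rho_{z,\alpha,\delta}}}$, and $g_{\alpha}(z,\cdot)$ is analytic on a neighborhood of the closed ellipse, in particular on $\mathcal{B}_{\rho_{z,\alpha,\delta}}$.

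There is no real obstacle here: the whole argument reduces to locating a single pole and verifying a one-line inequality against the semi-minor axis of the ellipse. The only thing that requires a moment of care is the algebraic identity $\rho_{z,\alpha,\delta} - \rho_{z,\alpha,\delta}^{-1} = 2(|\mathop{\mathrm{Re}} z|/\alpha - \delta)$, which is precisely the reason the somewhat peculiar formula \eqref{eq:def_rho_z_alpha} was chosen in the first place, and which will also be reused in the subsequent proof of Theorem~\ref{thm:total_error_of_J_alpha_N} to bound $|g_{\alpha}(z,\cdot)|$ uniformly on $\overline{\mathcal{B}_{\rho_{z,\alpha,\delta}}}$.
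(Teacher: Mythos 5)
Your proposal is correct and takes essentially the same approach as the paper: locate the unique pole at $x_{*} = -\mathrm{i}z/\alpha$, observe that $|\mathop{\mathrm{Im}} x_{*}| = |\mathop{\mathrm{Re}} z|/\alpha$, and compare with the semi-minor axis $(\rho_{z,\alpha,\delta}-\rho_{z,\alpha,\delta}^{-1})/2 = |\mathop{\mathrm{Re}} z|/\alpha - \delta$ of the Bernstein ellipse to conclude the pole lies outside $\overline{\mathcal{B}_{\rho_{z,\alpha,\delta}}}$. The paper phrases this more tersely (analyticity holds whenever $(\rho-\rho^{-1})/2 < |\mathop{\mathrm{Re}} z|/\alpha$), but the underlying argument is identical.
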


\begin{proof}
Because 
\begin{align}
g_{\alpha}(z, x)
= 
\frac{\alpha}{2\pi} 
\frac{\mathrm{e}^{\mathrm{i} \alpha x}}{\mathrm{i}\alpha x-z} ,
\label{eq:equiv_expr_of_hat_g}
\end{align}
the singularity point of $g_{\alpha}(z, \cdot)$ is
\begin{align}
\frac{z}{\mathrm{i}\alpha}, 
\end{align}
whose imaginary part is $-\mathop{\mathrm{Re}} z/\alpha$. 
Therefore 
$g_{\alpha}(z, \cdot)$
is analytic in $\mathcal{B}_{\rho}$
if $(\rho - \rho^{-1})/2 < |\mathop{\mathrm{Re}} z|/\alpha$
holds. 
Because $\rho_{z, \alpha, \delta}$ in~\eqref{eq:def_rho_z_alpha}
satisfies 
\begin{align}
\frac{\rho_{z, \alpha, \delta} - \rho_{z, \alpha, \delta}^{-1}}{2} = |\mathop{\mathrm{Re}} z|/\alpha - \delta,
\label{eq:eq_rho_z_alpha_delta}
\end{align} 
the conclusion holds. 
\end{proof}

Then we are in a position to prove Theorem~\ref{thm:total_error_of_J_alpha_N}. 

\begin{proof}[Proof of Theorem~\ref{thm:total_error_of_J_alpha_N}]
We bound $|g_{\alpha}(z, x)|$ for $x \in \overline{\mathcal{B}_{\rho_{z, \alpha, \delta}}}$ 
with $\rho_{z, \alpha, \delta}$ defined by~\eqref{eq:def_rho_z_alpha}.
It follows from the proof of Lemma~\ref{thm:rho_hat_g_analytic} that 
the absolute values of the denominators $(\mathrm{i}\alpha x-z)$ in~\eqref{eq:equiv_expr_of_hat_g} 
for $x \in \overline{\mathcal{B}_{\rho_{z, \alpha, \delta}}}$
are bounded from below as follows: 
\begin{align}
|\mathrm{i}\alpha x-z| 
= \alpha \left|x+\mathrm{i}z/\alpha\right|
\geq
\alpha\left(\frac{|\mathop{\mathrm{Re}} z|}{\alpha} -\frac{\rho_{z, \alpha, \delta} - \rho_{z, \alpha, \delta}^{-1}}{2}\right)
= 
\alpha\delta.
\label{eq:lb_of_denom_of_hat_g}
\end{align}
Furthermore, the absolute value of $\exp(\mathrm{i}\alpha x)$ in~\eqref{eq:equiv_expr_of_hat_g} 
for $x \in \overline{\mathcal{B}_{\rho_{z, \alpha, \delta}}}$
is bounded from above as follows: 
\begin{align}
|\exp(\mathrm{i}\alpha x)|
& \leq 
\exp(\alpha |\mathop{\mathrm{Im}}x|)
\leq
\exp\left( \alpha \cdot \frac{\rho_{z,\alpha,\delta} - \rho_{z,\alpha,\delta}^{-1}}{2} \right)
\notag \\
& = 
\exp( \alpha(|\mathop{\mathrm{Re}} z|/\alpha - \delta))
\leq 
\exp(|\mathop{\mathrm{Re}} z|). 
\label{eq:ub_of_sin_of_hat_g}
\end{align}
Then, it follows from 
\eqref{eq:equiv_expr_of_hat_g}, \eqref{eq:lb_of_denom_of_hat_g}, and~\eqref{eq:ub_of_sin_of_hat_g} 
that
\begin{align}
|g_{\alpha}(z, x)|
\leq
\frac{\exp(|\mathop{\mathrm{Re}} z|)}{2\pi \delta} 
\label{eq:ub_hat_g_on_B}
\end{align}
holds for any $x \in \overline{\mathcal{B}_{\rho_{z, \alpha, \delta}}}$. 

By Lemma~\ref{thm:rho_hat_g_analytic} and \eqref{eq:ub_hat_g_on_B}, 
the function $g_{\alpha}(z, \cdot)$ satisfies the assumption of Theorem~\ref{thm:error_of_GL}
with $\rho = \rho_{z, \alpha, \delta}$ in \eqref{eq:def_rho_z_alpha} 
and $C =\exp(|\mathop{\mathrm{Re}} z|)/(2\pi \delta)$. 
Then, by applying Theorem~\ref{thm:error_of_GL}, 
we get the conclusion. 
\end{proof}

\subsection{Proof of Theorem~\ref{thm:total_error_of_I_alpha_n_h_for_A}}
\label{sec:proof_I_alpha_n_h_for_A}
 
A counterpart of Theorem~\ref{thm:error_bound_for_DE4} holds 
for any matrix-valued function $f: \phi(\mathcal{D}_{d}) \to \mathbb{C}^{m\times m}$ 
with 
the absolute values $|\cdot|$ in the left-hand sides 
of~\eqref{eq:f_bound_on_phi_D_d_for_DE4} 
and~\eqref{eq:error_bound_for_DE4}
replaced by a matrix norm $\| \cdot \|$. 
Therefore 
it suffices to show that 
$f_{\alpha}(A, \cdot)$ is analytic in $\phi(\mathcal{D}_{d})$
and 
there exist $K > 0$ and $\beta \in (0,1]$ such that 
\begin{align}
\| f_{\alpha}(A, y) \| \leq K \left| \left(\frac{y}{1+y} \right)^{\beta-1} \exp(-\beta y) \right|
\label{eq:f_bound_on_phi_D_d_for_DE4_for_f_alpha_A}
\end{align}
holds for any $y \in \phi(\mathcal{D}_{d})$. 

Because of the expression of $f_{\alpha}(A, \cdot)$ in~\eqref{eq:f_alpha_z_eq_A}, 
its analyticity follows from that of $f_{\alpha}(\lambda_{i}, \cdot)$ for $i=1,\ldots, m$, 
which can be shown in the same manner as Lemma~\ref{thm:appropriate_d_for_D_d}
under the settings of $\alpha$ and $d$ 
in Theorem~\ref{thm:total_error_of_I_alpha_n_h_for_A}. 

To show \eqref{eq:f_bound_on_phi_D_d_for_DE4_for_f_alpha_A}, 
we need to estimate the norm of the inverse of the matrices 
that appear in~\eqref{eq:f_alpha_z_eq_A}. 
To this end, 
we use the following lemmas. 
We give proofs of Lemmas \ref{thm:T_inv_2_norm} and \ref{thm:T_inv_F_norm} 
in Appendix~\ref{sec:T_inv_norm} for completeness. 

\begin{lem}[{\cite[Chap.~1, \S 4, Ineq.~(4.12)]{kato1995perturbation}}]
\label{thm:T_inv_gen_norm}
Let $T \in \mathbb{C}^{m \times m}$ be a $m \times m$ non-singular matrix. 
Then, 
for any norm $\| \cdot \|$, 
it follows that there is a constant $\gamma$ such that 
\begin{align}
\left\| T^{-1} \right\| 
\leq
\gamma \frac{\| T \|^{m-1}}{| \det T |}. 
\label{eq:T_inv_gen_norm}
\end{align}
The constant $\gamma$ is independent of $T$, 
depending only on the norm $\| \cdot \|$. 
\end{lem}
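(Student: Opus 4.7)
The plan is to combine Cramer's rule with the equivalence of norms on the finite-dimensional space $\mathbb{C}^{m\times m}$. First I would write $T^{-1} = \operatorname{adj}(T)/\det T$, where the $(i,j)$-entry of $\operatorname{adj}(T)$ is $(-1)^{i+j}$ times the determinant of the $(m-1)\times(m-1)$ submatrix of $T$ obtained by deleting row $j$ and column $i$. In this way, bounding $\|T^{-1}\|$ reduces to bounding the entries of $\operatorname{adj}(T)$ in terms of $\|T\|$, and the factor $1/|\det T|$ appears automatically.

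Next I would estimate each cofactor by Hadamard's inequality: for any $k\times k$ matrix $M$ with rows $M_1,\ldots,M_k$, one has $|\det M|\le \prod_{i=1}^{k}\|M_i\|_2$. Applied to any $(m-1)\times(m-1)$ submatrix of $T$, and using $\|M_i\|_2\le \|T\|_{\mathrm F}$ for each row of the submatrix, this yields the entrywise bound
\begin{equation}
\max_{i,j}\bigl|\operatorname{adj}(T)_{ij}\bigr|\le \|T\|_{\mathrm F}^{\,m-1}.
\notag
\end{equation}
Hence, in the entrywise max-norm $\|\cdot\|_{\max}$, one has $\|\operatorname{adj}(T)\|_{\max}\le \|T\|_{\mathrm F}^{\,m-1}$.

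Finally I would invoke equivalence of norms on $\mathbb{C}^{m\times m}$: since the space is finite-dimensional, there exist constants $c_1,c_2>0$ depending only on $m$ and on the chosen norm $\|\cdot\|$ such that $\|M\|\le c_1\|M\|_{\max}$ and $\|M\|_{\mathrm F}\le c_2 \|M\|$ for every $M\in\mathbb{C}^{m\times m}$. Combining these with the adjugate identity,
\begin{equation}
\|T^{-1}\|=\frac{\|\operatorname{adj}(T)\|}{|\det T|}\le \frac{c_1\,\|\operatorname{adj}(T)\|_{\max}}{|\det T|}\le \frac{c_1\, c_2^{\,m-1}\,\|T\|^{m-1}}{|\det T|},
\notag
\end{equation}
so that $\gamma := c_1 c_2^{\,m-1}$ yields the desired inequality; this $\gamma$ depends only on $\|\cdot\|$ (and on the fixed dimension $m$), not on $T$.

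The main obstacle is merely conceptual rather than technical: one must verify that every step supplies a constant independent of $T$. The adjugate bound via Hadamard is entry-wise uniform in $T$, and the norm-equivalence constants depend only on the norm and on $m$; so the uniformity of $\gamma$ is immediate. No serious difficulty is anticipated.
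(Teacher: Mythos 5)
Your argument via Cramer's rule, Hadamard's inequality, and equivalence of norms is correct: each cofactor of $T$ is bounded by $\|T\|_{\mathrm{F}}^{m-1}$ via Hadamard applied to the $(m-1)\times(m-1)$ submatrices, and the two norm-equivalence constants combine to give $\gamma = c_1 c_2^{m-1}$, which depends only on $\|\cdot\|$ (and the fixed dimension $m$). Be aware, however, that the paper supplies no proof of this general lemma at all---it simply cites Kato. What the paper does prove, in Appendix~\ref{sec:T_inv_norm}, are the two specialized Lemmas~\ref{thm:T_inv_2_norm} and~\ref{thm:T_inv_F_norm}, which give the explicit constants $\gamma=1$ for the $2$-norm and $\gamma=\sqrt{m}$ for the Frobenius norm. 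The technique there is genuinely different: take the polar decomposition $T=UH$, exploit unitary invariance of $\|\cdot\|_2$ and $\|\cdot\|_{\mathrm{F}}$ to reduce to the positive-definite Hermitian factor $H$, and then bound $1/h_1$ (resp.\ $\sum 1/h_i^2$) against $h_m^{m-1}/\prod h_i$ (resp.\ its Frobenius analogue) directly in terms of the eigenvalues $0<h_1\le\cdots\le h_m$. That route relies on unitary invariance and hence cannot handle arbitrary norms, but it produces sharp constants; your route is more elementary and works for any norm, at the price of an implicit and generally far-from-optimal $\gamma$. Both are sound; they simply serve different purposes.
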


\begin{lem}[{\cite[Lem.~1]{kato1960estimation}}]
\label{thm:T_inv_2_norm}
In Lemma~\ref{thm:T_inv_gen_norm}, 
we can set $\gamma = 1$ if $\| \cdot \|$ is the $2$-norm $\| \cdot \|_{2}$. 
\end{lem}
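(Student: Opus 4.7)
The plan is to argue through the singular value decomposition (SVD) of $T$. Let $\sigma_{1} \geq \sigma_{2} \geq \cdots \geq \sigma_{m} > 0$ denote the singular values of $T$, which are strictly positive since $T$ is assumed non-singular. The three quantities in the desired inequality admit clean expressions in terms of these singular values, and the proof reduces to an elementary comparison.

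First I would recall the standard identifications
\begin{align}
\| T \|_{2} = \sigma_{1},
\qquad
\| T^{-1} \|_{2} = \frac{1}{\sigma_{m}},
\qquad
|\det T| = \sigma_{1} \sigma_{2} \cdots \sigma_{m}.
\notag
\end{align}
The first two follow from the definition of the $2$-norm via the SVD of $T$ and $T^{-1}$, respectively, and the third follows from $|\det T| = \sqrt{\det(T^{\ast} T)}$, since the eigenvalues of $T^{\ast} T$ are $\sigma_{i}^{2}$.

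Next I would substitute these expressions into the target inequality $\| T^{-1} \|_{2} \leq \| T \|_{2}^{m-1}/|\det T|$, which becomes
\begin{align}
\frac{1}{\sigma_{m}}
\leq
\frac{\sigma_{1}^{m-1}}{\sigma_{1} \sigma_{2} \cdots \sigma_{m}}.
\notag
\end{align}
Multiplying both sides by $\sigma_{m} > 0$ and rearranging, this is equivalent to
\begin{align}
\sigma_{1} \sigma_{2} \cdots \sigma_{m-1}
\leq
\sigma_{1}^{m-1},
\notag
\end{align}
which is immediate from $\sigma_{i} \leq \sigma_{1}$ for $i = 1, \ldots, m-1$.

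The whole argument is essentially a one-liner once the SVD identifications are written down; there is no real obstacle. The only thing worth flagging is to state the identifications carefully (especially $|\det T| = \prod_i \sigma_i$, which should be presented as a standard consequence of the SVD so that no appeal to outside references is needed).
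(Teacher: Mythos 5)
Your proof is correct and essentially matches the paper's: you express everything in terms of the singular values of $T$ via the SVD, whereas the paper accesses the same quantities through the polar decomposition $T = UH$ and the eigenvalues of $H$, but both reduce to the identical elementary inequality among the singular values. The two routes are interchangeable and equally rigorous.
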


\begin{lem}
\label{thm:T_inv_F_norm}
In Lemma~\ref{thm:T_inv_gen_norm}, 
we can set $\gamma = \sqrt{m}$ if $\| \cdot \|$ is the Frobenius norm $\| \cdot \|_{\mathrm{F}}$. 
\end{lem}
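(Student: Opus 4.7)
The plan is to reduce the inequality to its diagonal case via the singular value decomposition. Write $T = U \Sigma V^{*}$ with $\Sigma = \mathrm{diag}(\sigma_{1}, \ldots, \sigma_{m})$, where the singular values $\sigma_{i}$ are strictly positive because $T$ is non-singular. Since the Frobenius norm is unitarily invariant and $|\det U| = |\det V^{*}| = 1$, we have
\begin{align}
\|T\|_{\mathrm{F}}^{2} = \sum_{i=1}^{m} \sigma_{i}^{2}, \qquad \|T^{-1}\|_{\mathrm{F}}^{2} = \sum_{i=1}^{m} \sigma_{i}^{-2}, \qquad |\det T| = \prod_{i=1}^{m} \sigma_{i}. \notag
\end{align}
In particular, the three quantities appearing in~\eqref{eq:T_inv_gen_norm} become transparent functions of the singular values simultaneously.

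Next, I would square the desired bound $\|T^{-1}\|_{\mathrm{F}} \leq \sqrt{m}\, \|T\|_{\mathrm{F}}^{m-1}/|\det T|$ and clear the denominator by multiplying through by $\prod_{j} \sigma_{j}^{2}$. This reduces the claim to the purely algebraic inequality
\begin{align}
\sum_{i=1}^{m} \prod_{j \neq i} \sigma_{j}^{2} \; \leq \; m \Bigl( \sum_{j=1}^{m} \sigma_{j}^{2} \Bigr)^{m-1}. \notag
\end{align}

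This inequality follows from a crude termwise estimate: for each fixed $i$, every factor on the left satisfies $\sigma_{j}^{2} \leq \sum_{k} \sigma_{k}^{2}$, so $\prod_{j \neq i}\sigma_{j}^{2} \leq \bigl(\sum_{k} \sigma_{k}^{2}\bigr)^{m-1}$, and summing over the $m$ choices of $i$ produces the claimed factor of $m$. Taking square roots then delivers exactly the Frobenius-norm bound with constant $\sqrt{m}$.

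I do not expect a genuine obstacle here. The only subtle point worth noting is that the very crude per-term bound (which throws away the missing factor $\sigma_{i}^{2}$) is already tight enough to give precisely $\sqrt{m}$, so no refinement such as AM-GM is needed. The SVD is the essential input: without it one would have to work directly with the adjugate matrix and invoke Hadamard's inequality on each $(m-1) \times (m-1)$ minor, which yields the same constant but through a somewhat heavier calculation.
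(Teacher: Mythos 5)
Your proof is correct and takes essentially the same approach as the paper. The paper uses the polar decomposition $T=UH$ rather than the SVD, but after diagonalizing the positive-definite factor $H$ it arrives at the identical scalar inequality $\sum_{i}\prod_{j\neq i}\sigma_{j}^{2}\leq m\bigl(\sum_{j}\sigma_{j}^{2}\bigr)^{m-1}$ in the singular values and resolves it by the same crude term-by-term bound $\sigma_{j}^{2}\leq\sum_{k}\sigma_{k}^{2}$.
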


By using these lemmas, 
we prove Theorem~\ref{thm:total_error_of_I_alpha_n_h_for_A}. 

\begin{proof}[Proof of Theorem~\ref{thm:total_error_of_I_alpha_n_h_for_A}]
As already stated, 
$f_{\alpha}(A, \cdot)$ is analytic in $\phi(\mathcal{D}_{d})$
under the settings of $\alpha$ and $d$. 
In the following, 
let $\| \cdot \|$ be the $2$-norm $\| \cdot \|_{2}$ or 
the Frobenius norm $\| \cdot \|_{\mathrm{F}}$ of matrices. 
We bound 
\(
\| f_{\alpha}(A, x) \|
\)
for $x = \phi(s \pm \mathrm{i} t)$ with $s \in \mathbb{R}$ and $t \in (-d, d)$.  
To this end, we bound the norm of 
\begin{align}
\left((x\pm \mathrm{i}\alpha)I+A\right)^{-1}.
\label{eq:part_to_frac_z_A}
\end{align} 
Recall that $\lambda_{1}, \ldots, \lambda_{m}$ are the eigenvalues of $A$. 
Under the settings of $\alpha$ and $d$, 
we can bound $|x \pm \mathrm{i} \alpha + \lambda_{i}|$ from below 
in the same manner as \eqref{eq:dist_y_and_ialpha_z} in the proof of Theorem~\ref{thm:total_error_of_I_alpha_n_h}. 
That is, we have
\begin{align}
|x\pm \mathrm{i}\alpha +\lambda_i| 
& \geq
(\alpha - |\mathop{\mathrm{Im}} \lambda_{i}| - 2 \pi) \cos d - (-\mathop{\mathrm{Re}} \lambda_{i} + \log 2) \sin d
\geq
\ell_{A, \alpha, d}
\label{eq:lb_by_L_A_alpha_d}
\end{align}
for any $i = 1,\ldots, m$, 
where
\begin{align}
\ell_{A, \alpha, d}
:=
\min_{1 \leq i \leq m} 
\Big(
(\alpha - |\mathop{\mathrm{Im}} \lambda_{i}| - 2 \pi) \cos d - (-\mathop{\mathrm{Re}} \lambda_{i} + \log 2) \sin d
\Big). 
\label{eq:def_L_A_alpha_d}
\end{align}
By Lemma~\ref{thm:T_inv_gen_norm} and~\eqref{eq:lb_by_L_A_alpha_d}, 
we get a bound of 
$\| \left((x\pm \mathrm{i}\alpha)I+A\right)^{-1} \|$
as follows:
\begin{align}
\left\| \left((x\pm \mathrm{i}\alpha)I+A\right)^{-1} \right\|
& \leq
\frac{\gamma}{\ell_{A, \alpha, d}}
\prod_{i=1}^{m-1} 
\left( 
\frac{\left\| (x\pm \mathrm{i}\alpha)I+A \right\|}{|x\pm \mathrm{i}\alpha +\lambda_i|}
\right)
\notag \\
& \leq
\frac{\gamma}{\ell_{A, \alpha, d}}
\left( 
\frac{(|x| + \alpha) \| I \| + \| A \| }{ \max \{ |x| - \alpha - \| A \|_{2}, \ell_{A, \alpha, d} \}}
\right)^{m-1}.
\label{eq:ub_norm_of_y_alpha_A}
\end{align}
Because the right-hand side tends to $\gamma \| I \|^{m-1} / \ell_{A, \alpha, d}$ as $|x| \to \infty$, 
it has a bounded supremum over $x$:
\begin{align}
L_{A, \alpha, d, \| \cdot \|}
:=
\sup_{\begin{subarray}{c} x = \phi(s \pm \mathrm{i} t) \\ s \in \mathbb{R}, \, t \in (-d,d) \end{subarray}}
\frac{\gamma}{\ell_{A, \alpha, d}}
\left( 
\frac{(|x| + \alpha) \| I \| + \| A \| }{ \max \{ |x| - \alpha - \| A \|_{2}, \ell_{A, \alpha, d} \}}
\right)^{m-1}
< \infty.
\label{eq:def_M_A_alpha_d_norm}
\end{align}
From \eqref{eq:ub_norm_of_y_alpha_A} and~\eqref{eq:def_M_A_alpha_d_norm}, the norm of \eqref{eq:part_to_frac_z_A} can be bounded by
\begin{align}
\left\|
\left((x\pm \mathrm{i}\alpha)I+A\right)^{-1}
\right\|
\leq 
L_{A, \alpha, d, \| \cdot \|}.
\label{eq:up_first_term_of_f_alpha_A}
\end{align}

Then, 
we can derive from \eqref{eq:f_alpha_z_eq_A} and~\eqref{eq:up_first_term_of_f_alpha_A} that 
\begin{align}
\| f_{\alpha}(A, x) \| 
\leq
\frac{L_{A, \alpha, d, \| \cdot \|} }{\pi} \, |\exp(-x)|.
\end{align}
Thus $f_{\alpha}(A, \cdot)$ satisfies 
\eqref{eq:f_bound_on_phi_D_d_for_DE4_for_f_alpha_A}
with $K = L_{A, \alpha, d, \| \cdot \|} / \pi$ and $\beta = 1$. 
Therefore
we have the error estimate in~\eqref{eq:total_error_of_I_alpha_n_h_for_A}
for $h = \log(4dn)/n$ in~\eqref{eq:def_of_h_for_A}
by the counterpart of Theorem~\ref{thm:error_bound_for_DE4}
mentioned at the beginning of Section~\ref{sec:proof_I_alpha_n_h_for_A}. 
\end{proof}

\subsection{Proof of Theorem~\ref{thm:total_error_of_J_alpha_N_for_A}}
\label{sec:proof_J_alpha_N_for_A}

\begin{proof}
A counterpart of Theorem~\ref{thm:error_of_GL} holds  
for any matrix-valued function $g: \overline{\mathcal{B}_{\rho}} \to \mathbb{C}^{m\times m}$ 
with the absolute values $|\cdot|$ in the statement replaced by a matrix norm $\| \cdot \|$. 
Therefore it suffices to show that 
$g_{\alpha}(A, \cdot)$
is analytic in $\mathcal{B}_{\rho_{A, \alpha, \delta}}$
and
there exists a real number $C > 0$ such that
$\| g_{\alpha}(A, x) \| \leq C$
holds for any $x \in \overline{\mathcal{B}_{\rho_{A, \alpha, \delta}}}$, 
where $\rho_{A, \alpha, \delta}$ is defined by~\eqref{eq:def_rho_z_alpha_for_A}. 

Because of the expression of $g_{\alpha}(A, \cdot)$ in~\eqref{eq:hat_g_alpha_z_eq_A}, 
its analyticity follows from that of $g_{\alpha}(\lambda_{i}, \cdot)$ for $i=1,\ldots, m$, 
which can be shown in the same manner as 
Lemma~\ref{thm:rho_hat_g_analytic}
under the settings of $\alpha$, $\delta$, and $\rho_{A, \alpha, \delta}$. 

To estimate the norm of
\begin{align}
g_{\alpha}(A, x)
=
\frac{\alpha}{2\pi} \exp(\mathrm{i}\alpha x) \left(\mathrm{i}\alpha x I-A \right)^{-1},
\label{eq:equiv_expr_of_hat_g_for_A}
\end{align}
we bound 
$\left\| \left(\mathrm{i}\alpha x I-A \right)^{-1} \right\|$ 
by using Lemma~\ref{thm:T_inv_gen_norm}. 
In the same manner as~\eqref{eq:lb_of_denom_of_hat_g}, 
we can bound 
$|\mathrm{i}\alpha x - \lambda_{i}|$ from below as 
\begin{align}
|\mathrm{i}\alpha x - \lambda_{i}|
\geq
\alpha\left(|\mathop{\mathrm{Re}} \lambda_{i}|/\alpha - \frac{\rho_{A, \alpha, \delta} - \rho_{A, \alpha, \delta}^{-1}}{2}\right)
\geq 
\alpha\delta
\notag
\end{align}
for $i = 1,\ldots, m$. 
Then, 
by Lemma~\ref{thm:T_inv_gen_norm}, 
we have 
\begin{align}
\left\| \left(\mathrm{i}\alpha x I-A \right)^{-1} \right\|
& \leq
\gamma \, \frac{\| \mathrm{i}\alpha xI - A \|^{m-1}}{\alpha^m \delta^{m}}
\notag \\
& \leq 
\frac{\gamma}{\alpha^m \delta^{m}} \left( \alpha |x| \| I \| + \| A \| \right)^{m-1}
\notag \\
& \leq 
\frac{\gamma}{\alpha \delta^{m}} 
\left(\left( \frac{\rho_{A, \alpha, \delta} + \rho_{A, \alpha, \delta}^{-1}}{2}\right) \| I \| + \frac{\| A \|}{\alpha} \right)^{m-1}
\label{eq:lb_of_denom_of_hat_g_for_A}
\end{align}
for any $x \in \overline{\mathcal{B}_{\rho_{A, \alpha, \delta}}}$. 
In addition, 
in a similar manner to~\eqref{eq:ub_of_sin_of_hat_g}, 
$|\exp(\mathrm{i}\alpha x)|$ is bounded as follows:
\begin{align}
|\exp(\mathrm{i}\alpha x)|
\leq 
\exp \left(\min_{1\leq i \leq m} |\mathop{\mathrm{Re}} \lambda_{i}| \right). 
\label{eq:up_sin_for_A}
\end{align}
Therefore it follows from
\eqref{eq:equiv_expr_of_hat_g_for_A}, 
\eqref{eq:lb_of_denom_of_hat_g_for_A}, and 
\eqref{eq:up_sin_for_A} 
that 
$\| g_{\alpha}(A, t) \| \leq C_{A, \alpha, \delta, \| \cdot \|}$, 
where
\begin{align}
C_{A, \alpha, \delta, \| \cdot \|}
:= 
\frac{\gamma}{2\pi \delta^{m}} 
\left( \left( \frac{\rho_{A, \alpha, \delta} + \rho_{A, \alpha, \delta}^{-1}}{2} \right) \| I \| + \frac{\| A \|}{\alpha}  \right)^{m-1}
\exp \left( \min_{1\leq i \leq m} |\mathop{\mathrm{Re}} \lambda_{i}| \right).
\label{eq:def_C_A_alpha_delta}
\end{align}

From the above arguments, 
we get the conclusion
by the counterpart of Theorem~\ref{thm:error_of_GL} 
mentioned at the beginning of this proof. 
\end{proof}

%-------
\section{Concluding remarks}
\label{sec:conclusion}

In this paper, 
we proposed a quadrature-based formula for computing the exponential function of matrices 
with a non-oscillatory integral on an infinite interval and an oscillatory integral on a finite interval. 
We applied the double-exponential (DE) formula and the Gauss-Legendre formula 
to the former and latter integrals, respectively. 
The error of the proposed formula is rigorously bounded as shown by 
Theorems~\ref{thm:total_error_of_I_alpha_n_h_for_A} and~\ref{thm:total_error_of_J_alpha_N_for_A}. 
From the results of the numerical experiments in Section~\ref{sec:num_exp}, 
we can observe that the error of the proposed formula is consistent with the theoretical analysis. 

There is room for improvement in the proposed formula and its theoretical analysis. 
As we pointed out in Sections~\ref{sec:numerical_experiment_alpha_k} and~\ref{sec:numerical_experiment_DE_GL}, 
the theoretical values of the parameters $\alpha$, $k$, and $d$ in the formula 
are not necessarily ideal. 
Therefore we should find a better way to determine these parameters from a practical point of view. 
In addition, 
finding better integral expressions for the exponential function and quadrature formulas for them remains a challenge.

%-------
\backmatter
%-------

%\bmhead{Acknowledgements}
%
%This work was supported by JSPS KAKENHI Grant Number 20H00581.

\section*{Declarations}

\bmhead{Funding}

This work was supported by JSPS KAKENHI Grant Number 20H00581.

\bmhead{Competing interests}

The authors have no competing interests to declare that are relevant to the content of this article.

%-------
% Appendix
%-------

\begin{appendices}

%-------
%\section{Integral formula for the exponential function via the Fourier transform}
%\label{sec:int_rep_Fourier}
%\input{sec_int_rep_Fourier.tex}

%-------
\section{Proofs of Lemmas~\ref{thm:T_inv_2_norm} and~\ref{thm:T_inv_F_norm}}
\label{sec:T_inv_norm}

\begin{proof}[Proof of Lemma~\ref{thm:T_inv_2_norm}]
This proof is based on that of Lemma 1 in \cite{kato1960estimation}. 
Let $T = UH$ be the polar decomposition of $T$, 
where $U$ is a unitary matrix and $H$ is a positive-definite Hermitian matrix.  
Because the $2$-norm is invariant under unitary transformations, 
we have
\(
\| T \|_{2} = \| U H \|_{2} = \| H \|_{2}
\)
and
\(
\| T^{-1} \|_{2} = \| H^{-1} U^{-1} \|_{2} = \| H^{-1} \|_{2}
\).
Furthermore, 
\(
| \det T | = | \det U | | \det H | = | \det H |
\)
follows from the fundamental property of determinants. 
Therefore it suffices to show that 
\begin{align}
\left\| H^{-1} \right\|_{2} \leq \frac{\| H \|_{2}^{m-1}}{|\det H|}.
\label{eq:goal_ineq_for_H_2_norm}
\end{align}
Let $h_{1},\ldots, h_{m}$ be the eigenvalues of $H$ with $0 < h_{1} \leq \cdots \leq h_{m}$. 
Then, Inequality~\eqref{eq:goal_ineq_for_H_2_norm} is equivalent to 
\begin{align}
\frac{1}{h_{1}} 
\leq 
\frac{1}{h_{1}} \cdot \frac{h_{m}}{h_{2}} \cdots \frac{h_{m}}{h_{m}}, 
\notag
\end{align}
which holds obviously. Thus Inequality~\eqref{eq:goal_ineq_for_H_2_norm} holds. 
\end{proof}

\begin{proof}[Proof of Lemma~\ref{thm:T_inv_F_norm}]
Because the Frobenius norm is also invariant under unitary transformations, 
we can use the same procedure 
with the polar decomposition 
as the proof of Lemma~\ref{thm:T_inv_2_norm}. 
Therefore we have only to prove that
\begin{align}
\left\| H^{-1} \right\|_{\mathrm{F}} \leq \sqrt{m} \, \frac{\| H \|_{\mathrm{F}}^{m-1}}{|\det H|}
\label{eq:goal_ineq_for_H_F_norm}
\end{align}
holds for any positive-definite Hermitian matrix $H$. 
Let $h_{1},\ldots, h_{m}$ be the eigenvalues of $H$ with $0 < h_{1} \leq \cdots \leq h_{m}$. 
Then, 
Inequality~\eqref{eq:goal_ineq_for_H_F_norm} is equivalent to 
\begin{align}
\frac{1}{h_{1}^{2}} + \cdots + \frac{1}{h_{m}^{2}} 
\leq 
m \, \frac{(h_{1}^{2} + \cdots + h_{m}^{2})^{m-1}}{h_{1}^{2} \cdots h_{m}^{2}}. 
\notag
\end{align}
This inequality is shown as follows:
\begin{align}
m \, \frac{(h_{1}^{2} + \cdots + h_{m}^{2})^{m-1}}{h_{1}^{2} \cdots h_{m}^{2}}
& =
\sum_{i=1}^{m} 
\frac{(h_{1}^{2} + \cdots + h_{m}^{2})^{m-1}}{h_{1}^{2} \cdots h_{m}^{2}}
\notag \\
& = 
\sum_{i=1}^{m} 
\frac{1}{h_{i}^{2}} 
\prod_{\begin{subarray}{c} 1 \leq j \leq m \\  j \neq i \end{subarray}} \frac{h_{1}^{2} + \cdots + h_{m}^{2}}{h_{j}^{2}}
\geq 
\sum_{i=1}^{m} 
\frac{1}{h_{i}^{2}}. 
\notag 
\end{align}
Thus Inequality~\eqref{eq:goal_ineq_for_H_F_norm} holds. 
\end{proof}

\end{appendices}

%-------
% References
%------

\bibliography{bib_num_int_exp}

\end{document}